\documentclass[11pt,reqno]{amsart}
\usepackage[T1]{fontenc}
\usepackage[utf8]{inputenc}
\usepackage[a4paper,text={160mm,240mm},centering]{geometry}
\usepackage{amsmath,amssymb,amsthm,mathrsfs}
\usepackage{xcolor}
\usepackage{microtype}
\usepackage[hidelinks]{hyperref}

\newif\ifshowrevision
\showrevisionfalse
\newcommand{\rev}[1]{{\ifshowrevision\color{red}\fi #1}}
\newenvironment{revision}{\par\begingroup\ifshowrevision\color{red}\fi}{\par\endgroup}
\definecolor{reviewgreen}{RGB}{0,112,48}
\newcommand{\greenrev}[1]{{\ifshowrevision\color{reviewgreen}\fi #1}}
\newenvironment{greenrevision}{\par\begingroup\ifshowrevision\color{reviewgreen}\fi}{\par\endgroup}
\newtheorem{theorem}{Theorem}[section]
\newtheorem{lemma}[theorem]{Lemma}
\newtheorem{prop}[theorem]{Proposition}
\newtheorem{corollary}[theorem]{Corollary}
\newtheorem{conj}[theorem]{Conjecture}
\theoremstyle{definition}
\newtheorem{definition}[theorem]{Definition}
\newtheorem{remark}[theorem]{Remark}
\newtheorem{example}[theorem]{Example}
\newtheorem*{ack}{Acknowledgements}

\DeclareMathOperator{\Ann}{Ann}
\DeclareMathOperator{\tr}{tr}
\DeclareMathOperator{\gr}{gr}
\DeclareMathOperator{\ord}{ord}
\DeclareMathOperator{\Der}{Der}
\DeclareMathOperator{\Soc}{Soc}

\NewMarkClass{paper-section}
\makeatletter
\let\paper@tocwrite\@tocwrite
\renewcommand{\@tocwrite}[2]{%
  \paper@tocwrite{#1}{#2}%
  \def\paper@section{section}\def\paper@kind{#1}%
  \ifx\paper@kind\paper@section
    \ifx\@secnumber\@empty
      \InsertMark{paper-section}{#2}%
    \else
      \InsertMark{paper-section}{\thesection.\enspace#2}%
    \fi
  \fi
}

\makeatother

\allowdisplaybreaks
\title[Euler conductors]{The Hassanzadeh--Nasrollah Nejad--Simis Conjecture on Euler Conductors}

\author{Yizhi Zhang}
\address{Zhili College, Tsinghua University, Beijing, 100084, P. R. China}
\email{yizhi-zh24@mails.tsinghua.edu.cn}

\author{Huaiqing Zuo}
\address{Department of Mathematical Sciences, Tsinghua University, Beijing, 100084, P. R. China}
\email{hqzuo@mail.tsinghua.edu.cn}

\begin{document}
\begin{abstract}
{\color{black} We prove the Hassanzadeh--Nasrollah Nejad--Simis conjecture:
if {\thickmuskip=3mu $f\in (x_1,\ldots,x_n)^2\mathrel{\textcolor{black}{\subsetneq}} k[[x_1,\ldots,x_n]]$} has an
isolated critical point and $\operatorname{char}k=0$, then its Euler
conductor $J_f:f$ is not contained in the Tjurina ideal $(J_f,f)$.
More generally, if $A=R/I$ is a nonzero Noetherian $k$-algebra and
$a=[f]\in A$ is nilpotent with $d_{A/k}a=0$, then
$I:f\nsubseteq(I,f)$. For a local ring $(R,\mathfrak n)$ we obtain
the stronger noncontainment
$I:f\nsubseteq(I,f)+\mathfrak n(I:f)$.
The proof reduces a hypothetical containment to a self-exact
square-zero element and detects its differential by the trace of a
regular representation over the dual numbers. 
Finally, an explicit five-variable isolated singularity satisfies
$J_f:f\subseteq\overline{J_f}$, disproving the integral-closure
strengthening proposed by Ma and Zuo.}

Keywords.  Euler conductor, K\"ahler differentials, 
isolated hypersurface singularity, integral closure.
		
		MSC(2020).  Primary 14B05; Secondary 13N05, 32S05.
\end{abstract}
\maketitle

\begin{revision}
\begingroup
\showrevisionfalse
\color{black}
\section{Introduction}\label{sec:introduction}

Let $k$ be a field of characteristic zero, put
$R=k[[x_1,\ldots,x_n]]$ and $\mathfrak m=(x_1,\ldots,x_n)$, and
let $f\in\mathfrak m^2$ have an isolated critical point, meaning that
its Jacobian ideal
\[
J_f=(\partial_1f,\ldots,\partial_nf)
\]
is $\mathfrak m$-primary. The Tjurina ideal is $I_f=(J_f,f)$.
The Milnor and Tjurina numbers are
$\mu_f=\dim_k(R/J_f)$ and $\tau_f=\dim_k(R/I_f)$.
For background on these invariants and their deformation-theoretic
meaning, see \cite{GLS}; the topological interpretation of $\mu_f$
for complex analytic germs is due to Milnor \cite{Mil}.
The exact sequence
\[
0\longrightarrow [f](R/J_f)\longrightarrow R/J_f
\longrightarrow R/I_f\longrightarrow0
\]
shows that $\mu_f-\tau_f=\dim_k[f](R/J_f)$. In particular,
$\mu_f=\tau_f$ if and only if $f\in J_f$. For an isolated complex
analytic hypersurface germ, Saito's criterion identifies this last
condition with weighted homogeneity after an analytic change of
coordinates \cite{Sai}.

The numerical comparison of $\mu_f$ and $\tau_f$ has a substantial
literature; see, for example, \cite{Alm,BH,DG,Liu,Wat} and the
recent work \cite{MZ}. A complementary question concerns the position
of the \emph{Euler conductor}
\[
J_f:f=\{g\in R:gf\in J_f\}
\]
relative to $I_f$. Hassanzadeh, Nasrollah Nejad, and Simis proposed
the following statement and proved it for plane curves
\cite[Conjecture 3.6 and Proposition 3.7]{HNS}.

\begin{conj}[Hassanzadeh--Nasrollah Nejad--Simis]\label{conj1}
Under the preceding assumptions,
\begin{equation}\label{eq:intro-original}
J_f:f\nsubseteq(J_f,f).
\end{equation}
\end{conj}

We prove this conjecture in every dimension. The key is to pass from
ideals to a nilpotent element of a quotient algebra. For $A=R/J_f$
and $a=[f]$, one has
\[
(J_f:f)/J_f=\Ann_A(a),\qquad (J_f,f)/J_f=aA.
\]
Thus \eqref{eq:intro-original} asks whether
$\Ann_A(a)\nsubseteq aA$. In the finite-dimensional setting this
means that multiplication by $a$ has at least one Jordan block of
size one. Nilpotence alone does not imply this: in
$k[t]/(t^2)$, the annihilator of $t$ equals $tA$.
What distinguishes a Jacobian quotient is the identity
$d_{A/k}a=0$ in its module of ordinary K\"ahler differentials.

\begin{theorem}\label{thm:intro-main}
Let $k$ be a field of characteristic zero, let $R$ be a commutative
$k$-algebra, and let $I\subsetneq R$ be an ideal such that $A=R/I$
is Noetherian. If $a=[f]\in A$ is nilpotent and $d_{A/k}a=0$, then
\[
I:f\nsubseteq(I,f).
\]
\end{theorem}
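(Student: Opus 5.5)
The plan is to argue by contradiction: assume $I:f\subseteq(I,f)$. In $A=R/I$ this says $\Ann_A(a)\subseteq aA$. If $a=0$ then $\Ann_A(a)=A$ and $aA=0$, which forces $A=0$ and contradicts $I\neq R$. So $a\neq0$. The containment, nilpotence, and the identity $d_{A/k}a=0$ are all preserved by localization, because Kähler differentials commute with localization. I will therefore localize at a maximal ideal $\mathfrak n$ of $A$ at which $a/1\neq0$, reducing to a Noetherian local ring $(A,\mathfrak n)$ with $0\neq a$ nilpotent.

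The second step is the reduction, announced in the abstract, to a \emph{self-exact square-zero element}: some $e\in B$, in a nonzero algebra $B$ derived from $A$, with $e^2=0$, $\Ann_B(e)=eB$, and $d_{B/k}e=0$.

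\begin{itemize}
\item First try an induction on the nilpotency index $m+1$ of $a$, where $a^{m+1}=0\neq a^m$. Since $a^m\in\Ann(a)\subseteq aA$, write $a^m=ab$, and try to show that a suitable power $e=a^j$ is self-exact, either in $A$ or in a quotient $A/\Ann(a^i)$. The hypothesis $\Ann(a)\subseteq aA$ should propagate to these quotients.
\item Exactness of $A\xrightarrow{e}A\xrightarrow{e}A$ then makes $A$ (or $B$) behave like a free module over the dual numbers $D=k[\varepsilon]/(\varepsilon^2)$ via $\varepsilon\mapsto e$. In the Artinian case this is exactly the statement that all Jordan blocks of multiplication by $e$ have size two.
\item Keeping $d e=0$ is easy when $e$ is a power of $a$, since $d(a^j)=ja^{j-1}\,da=0$. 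Passing to quotients of $A$ also preserves it, because $d_{(A/J)/k}$ is a quotient of $d_{A/k}$.
\end{itemize}

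To make the freeness finite and usable, I also want to pass to an Artinian quotient $A/\mathfrak n^N$ and then to a finite-dimensional situation. Artin--Rees should control $\Ann$ under this truncation.

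The last step is detection by a trace. Suppose $B$ is finite free of rank $r\geq1$ over $D$ via $\varepsilon\mapsto e$. For finite free algebras there is a trace on Kähler differentials, $\operatorname{Tr}\colon\Omega_{B/k}\to\Omega_{D/k}$ (the Kunz trace), which is $D$-linear. It satisfies $\operatorname{Tr}(d_{B/k}(\varepsilon\cdot1))=r\,d_{D/k}\varepsilon$. Moreover $\Omega_{D/k}=D\,d\varepsilon/(2\varepsilon\,d\varepsilon)$, so $d\varepsilon\neq0$. Since $d_{B/k}e=0$, we would get $r\,d\varepsilon=0$, which contradicts $\operatorname{char}k=0$. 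If the general Kunz trace is inconvenient, I will build it by hand: take the regular representation of $B$ over $D$ and apply $\operatorname{tr}$ to the matrix of $d$ of multiplication operators, using Leibniz and cyclicity of the trace.

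I expect the main obstacle to be the middle step. It must produce, from the bare containment $\Ann(a)\subseteq aA$ with $a$ of arbitrary nilpotency index, a nonzero algebra that is genuinely free over the dual numbers via an element $e$ with $de=0$. The Jordan-block picture only guarantees blocks of size at least two, not exactly two, so some quotient or filtration (for instance by $\Ann(a^i)$, or by associated graded pieces of the $a$-adic filtration) will be needed. The truncation to finite length must also be carried out without destroying the containment.
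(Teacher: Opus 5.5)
\textbf{There is a genuine gap in the middle step.} Your outer frame and final step agree with the paper. The case $a=0$ is handled the same way, and the trace you propose to build by hand, $T(b\,dx)=\tr(\rho(b)\,d\rho(x))$ on the regular representation over the dual numbers, is exactly Lemma~\ref{lem:differential-trace-map}. But the step you flag as the main obstacle is left open, and the candidates you list do not close it.

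Test them on $A=k[t,u]/(t^3,u^2,t^2u)$ with $a=t$. Ignore the differential here; this only tests the reduction. Then $\Ann_A(t)=(t^2,tu)\subseteq tA$, and $m_t$ has Jordan blocks of sizes $3$ and $2$. The power $t^2$ is not self-exact, since $\Ann_A(t^2)=(t,u)\neq t^2A$. Passing to $A/\Ann_A(t^i)\simeq t^iA$ does not help either: the blocks there are those of $m_t$ shortened by $i$, so of sizes $2,1$ or just $1$.

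The missing idea is simply to kill $a^2$. Put $C=A/(a^2)$ and $c=a\bmod(a^2)$; this truncates every block to size $\min\{s,2\}=2$. In the example, $C=k[t,u]/(t^2,u^2)$. In general:
\begin{itemize}
\item if $cx=0$, then $ax=a^2y$ in $A$, so $x-ay\in\Ann_A(a)\subseteq aA$ and $\bar x\in cC$;
\item $c\neq0$, since $a=a^2v$ would give $a(1-av)=0$ with $1-av$ a unit;
\item $d_{C/k}c=0$ is the image of $d_{A/k}a=0$.
\end{itemize}
No induction on the nilpotency index is needed.

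\textbf{Your passage to a finite-dimensional algebra also needs repair.} Truncating by $\mathfrak n^N$ destroys self-exactness rather than preserving it. In $k[x,y]_{(x,y)}/(y^2)$ the element $y$ is self-exact. Modulo $\mathfrak n^N$, however, the class of $x^{N-1}$ kills $y$ without lying in $(y)+\mathfrak n^N$. Artin--Rees confines this defect to high powers of $\mathfrak n$ but cannot remove it.

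The paper instead localizes $C$ at a minimal prime $\mathfrak p$:
\begin{itemize}
\item localization is exact, so $\Ann(c)=cC$ persists;
\item $c/1\neq0$, because any $s$ with $sc=0$ lies in $\Ann_C(c)=cC\subseteq\mathfrak p$;
\item $E=C_{\mathfrak p}$ is Artinian local.
\end{itemize}
Even then $E$ need not be finite over $k$; in the example above, $E=k(x)[y]/(y^2)$. So $D=k[\varepsilon]/(\varepsilon^2)$ does not make $E$ a finite free module, and the trace over $k$ is unavailable. You need a coefficient field $L\supseteq k$ inside $E$, obtained by Newton lifting in characteristic zero (Lemma~\ref{lem:coefficient-field}). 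Then $E$ is free of finite rank over $L[\varepsilon]/(\varepsilon^2)$. The surjection $\Omega_{E/k}\to\Omega_{E/L}$ carries $dc=0$ to $d_{E/L}c=0$, and only then does your trace argument apply.
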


\begin{theorem}\label{thm:intro-local}
Under the assumptions of Theorem~\ref{thm:intro-main}, if
$(R,\mathfrak n)$ is local, then
\[
I:f\nsubseteq(I,f)+\mathfrak n(I:f).
\]
\end{theorem}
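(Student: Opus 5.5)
We will deduce this from Theorem~\ref{thm:intro-main} by a Nakayama argument carried out in the quotient ring $A=R/I$. No new differential input is needed.

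First we translate to $A$. Since $I\subsetneq R$ and $R$ is local, $A=R/I$ is a nonzero Noetherian local ring with maximal ideal $\mathfrak m_A=\mathfrak n/I$. Put $N=\Ann_A(a)=(I:f)/I$. Taking images modulo $I$ (all ideals involved contain $I$), the containment $I:f\subseteq(I,f)+\mathfrak n(I:f)$ is equivalent to
\[
N\subseteq aA+\mathfrak m_A N .
\]
Likewise, the conclusion of Theorem~\ref{thm:intro-main} says exactly that $N\nsubseteq aA$.

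Next we improve the hypothetical containment to one inside $N$. Assume $N\subseteq aA+\mathfrak m_A N$, and take $x\in N$. We can write $x=ay+z$ with $y\in A$ and $z\in\mathfrak m_A N\subseteq N$. Then $ay=x-z$ lies in $N$, so $ay\in N\cap aA$. Hence
\[
N=(N\cap aA)+\mathfrak m_A N .
\]
Now set $Q=N/(N\cap aA)$. This module is finitely generated because $A$ is Noetherian, and the last display says $Q=\mathfrak m_A Q$. Nakayama's lemma therefore gives $Q=0$, that is, $N\subseteq aA$. This is precisely $I:f\subseteq(I,f)$, which contradicts Theorem~\ref{thm:intro-main}, whose hypotheses (nilpotence of $a$, $d_{A/k}a=0$, $A$ Noetherian, $I$ proper) are the ones assumed here.

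The only point needing care is the middle step. A naive application of Nakayama to $N$ against $aA$ does not work directly, because $aA$ is not a submodule of $N$. The fix is to observe that the $aA$-component of any decomposition of an element of $N$ automatically lies in $N$. Once that is checked, everything else is formal. The real difficulty, namely the trace argument over the dual numbers, sits entirely inside Theorem~\ref{thm:intro-main}.
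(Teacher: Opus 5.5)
Your proposal is correct and is essentially the paper's own argument. The paper likewise applies Nakayama's lemma to the finitely generated module $\Ann_A(a)/(\Ann_A(a)\cap aA)$, which is nonzero by Theorem~\ref{thm:intro-main}, and uses the modular law $K\cap((I,f)+\mathfrak nK)=(K\cap(I,f))+\mathfrak nK$ in the same place you use $N=(N\cap aA)+\mathfrak m_AN$; the only difference is that the paper argues directly through $N/\mathfrak m_AN\simeq K/((K\cap(I,f))+\mathfrak nK)$ instead of by contradiction.
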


Theorems~\ref{thm:intro-main} and \ref{thm:intro-local} are proved
as Theorems~\ref{thm:noetherian-colon} and
\ref{thm:local-strengthening}. Only the quotient $R/I$, not $R$
itself, is required to be Noetherian. If $R$ is Noetherian local,
the second theorem gives an element of $I:f$ that can be included
in a minimal generating set and whose class remains nonzero modulo
$(I,f)+\mathfrak n(I:f)$; see Remark~\ref{rem:minimal-generator}.

The argument has two main steps. First, a hypothetical containment
$\Ann_A(a)\subseteq aA$ produces a nonzero element $c$ in
$C=A/(a^2)$ with $c^2=0$ and $\Ann_C(c)=cC$. Second, we prove the
conormal obstruction
\[
d_{C/k}c\notin c\Omega_{C/k}.
\]
For a finite-dimensional algebra this follows from a differential
trace over the dual numbers. Localization at a minimal prime and
a coefficient-field lifting argument reduce the Noetherian case
to that computation. For the terminology of exact zero divisors, see \cite{HenSeg}.
No Gorenstein or complete-intersection hypothesis enters this obstruction.

For formal power series, verifying $d_{A/k}[f]=0$ requires care:
ordinary K\"ahler differentials cannot simply be identified with
continuous differentials. Lemma~\ref{lem:finite-jet} supplies a
polynomial-truncation argument when the critical quotient has
finite length. This yields the strengthened Jacobian statement
\[
J_f:f\nsubseteq(J_f,f)+\mathfrak m(J_f:f).
\]
For smooth algebras of finite type, differential vanishing is
immediate from the cotangent module. At a rational critical point,
subtracting the critical value also makes the resulting class
nilpotent, even when the critical locus is positive-dimensional.

There is an important limitation. Ma and Zuo proposed the stronger
statement below \cite[Conjecture 3.10]{MZ}.
\begin{conj}[Integral-closure strengthening]\label{conj2}
\greenrev{Let $K$ be a field, with no restriction on its characteristic,
and put $S=K[[x_1,\ldots,x_n]]$.
Suppose $f\in(x_1,\ldots,x_n)^2$ and
$\dim_K S/(J_f,f)<\infty$.
If $f\in\overline{J_f}$, then $J_f:f\nsubseteq\overline{J_f}$.}
\end{conj}
\begin{greenrevision}
Here the finite-dimensional Tjurina quotient expresses the
isolated-hypersurface hypothesis used in \cite[Conjecture 3.10]{MZ}.
Under the additional assumption $f\in\overline{J_f}$, it is
equivalent to the isolated-critical-point condition:
an integral equation for $f$ gives $f^s\in J_f$ for some $s$,
so $f\in\sqrt{J_f}$ and
\[
\sqrt{(J_f,f)}=\sqrt{J_f}.
\]
Thus $(J_f,f)$ is primary to the maximal ideal if and only if
$J_f$ is. This observation is valid in arbitrary characteristic.
\end{greenrevision}
The hypothesis $f\in\overline{J_f}$ implies
$(J_f,f)\subseteq\overline{J_f}$; in characteristic zero this
integrality hypothesis is automatic
\greenrev{\cite[Theorem 7.1.5 and Corollary 7.2.6]{HS}}.
Nevertheless, Conjecture~\ref{conj2} is false even in characteristic
zero. We prove that
\[
f=\frac14\sum_{i=1}^5x_i^4+(x_1+\cdots+x_5)^5
\quad\text{in }\mathbb Q[[x_1,\ldots,x_5]]
\]
has an isolated critical point and satisfies
$J_f:f\subseteq\overline{J_f}=\mathfrak m^3$.
The needed injectivity in a monomial complete intersection is a
special case of the strong Lefschetz theorem
\cite{Stanley,Watanabe}; we give its representation-theoretic
verification explicitly.

Section~\ref{sec:background} fixes the conventions and recalls the
algebraic background. Section~\ref{sec:finite-trace} gives the
Jordan reformulation and the finite-dimensional trace lemma.
Sections~\ref{sec:noether-rigidity} and \ref{sec:colon-theorems}
prove the conormal obstruction and the two ideal quotient
theorems. Section~\ref{sec:applications} treats Jacobian ideals and
critical loci, and Section~\ref{sec:integral-counterexample}
constructs the integral-closure counterexample.

\begin{ack}
The authors are grateful to AI models, particularly ChatGPT 6 Astra, for valuable assistance in exploring some of the proof strategies and calculations. The authors take full responsibility for the human verification and exposition of the results.    H. Zuo was supported by NSFC Grant 12271280 and BJNSF Grant 1252009.

\end{ack}

\endgroup 
\begingroup
\showrevisionfalse
\color{black}
\section{Preliminaries and conventions}\label{sec:background}

\textcolor{black}{We introduce, in turn, the basic isolated-singularity setting, the numerical and integral-closure background, and the differential tools needed for the main proof. The complete-intersection and Gorenstein properties are recalled only for context; the main proof does not rely on them.}

All rings are commutative with identity, all algebra maps preserve
the identity, and all fields have characteristic zero unless
explicitly stated otherwise. No finite-type hypothesis is implicit
in the term $k$-algebra. For an ideal $I\mathrel{\textcolor{black}{\subseteq}} R$, write
$(I,f)=I+fR$ and $I:f^0=I$. An ideal is \emph{$\mathfrak m$-primary}
if its radical is the maximal ideal $\mathfrak m$; in the
Noetherian local setting its quotient then has finite length.
For a finite-length $R$-module $M$, $\lambda_R(M)$ denotes its
 length. If $R$ has residue field $k$ and contains that
field, this is also $\dim_kM$.

\subsection{Singularities and weighted homogeneity}
In the formal hypersurface setting we always impose
$f\in\mathfrak m^2$ and $J_f$ $\mathfrak m$-primary. In particular,
the critical value is zero and the critical quotient is nonzero.
These assumptions must not be replaced just by the condition that
$R/J_f$ is Artinian: adding a nonzero constant to $f$ does not
change $J_f$, but does change the class of $f$ in that quotient.

\begin{definition}\label{quasi}
A nonzero polynomial $F\in k[x_1,\ldots,x_n]$ is
\emph{weighted homogeneous} of weighted degree $d$ for positive
rational weights $w_1,\ldots,w_n$ if each monomial $x^\alpha$
with nonzero coefficient in $F$ satisfies
$\sum_iw_i\alpha_i=d$. A formal power series is
\emph{formally right equivalent} to another if a continuous
$k$-algebra automorphism of $k[[x_1,\ldots,x_n]]$ transforms one
into the other. A germ is \emph{quasihomogeneous} if it is right
equivalent to a weighted homogeneous polynomial, using analytic
coordinate changes in the analytic category.
\end{definition}

\greenrev{After multiplying all weights and $d$ by a common denominator,
the weights may be taken to be positive integers and $d$ to be
a nonnegative integer. The degree is positive when $F$ is
nonconstant; a nonzero constant has degree zero.} The monomial definition is
equivalent to
\[
F(t^{w_1}x_1,\ldots,t^{w_n}x_n)=t^dF(x_1,\ldots,x_n)
\quad\text{and}\quad
dF=\sum_iw_ix_i\partial_iF.
\]

Indeed, the first identity is checked coefficientwise, and the
coefficient of $x^\alpha$ in the second is multiplied by
$\sum_iw_i\alpha_i$; in characteristic zero it equals $d$ precisely
for the prescribed monomials. We do not impose an unnecessary
upper bound on individual weights. Saito's theorem \cite{Sai}
states that, for an isolated germ in $\mathbb C\{x_1,\ldots,x_n\}$,
quasihomogeneity is equivalent to $f\in J_f$. Our main proof uses
the latter algebraic condition when appropriate and does not
require a descent version of Saito's theorem over arbitrary fields.

\textcolor{black}{The preceding discussion concerns the special case of quasihomogeneity. We now recall the structure of the quotient algebra that holds for every isolated critical point.}

\begin{greenrevision}
Here a sequence $g_1,\ldots,g_r$ is \emph{regular} if
$(g_1,\ldots,g_r)\ne R$ and each $g_i$ is a nonzerodivisor
in the quotient ring $R/(g_1,\ldots,g_{i-1})$.

In the preceding isolated-critical-point setting, $J_f$ is
$\mathfrak m$-primary. Hence its $n$ partial-derivative generators
$\partial_1f,\ldots,\partial_nf$ form a system of parameters
in the $n$-dimensional regular local ring $R$.
Since a regular local ring is Cohen--Macaulay, this system of
parameters is a regular sequence. This conclusion does not
require $f$ to be quasihomogeneous.

Consequently, putting $A=R/J_f$, we obtain an Artinian local
complete intersection, which is in particular Gorenstein.
Write $\mathfrak m_A=\mathfrak m/J_f$ for its maximal ideal.
Since its residue field is $k$, the Gorenstein property is
equivalent to
\[
\dim_k\Soc(A)=1,\qquad
\Soc(A)=\Ann_A(\mathfrak m_A).
\]
These standard facts are recalled only for context; the proofs
below do not rely on these complete-intersection or Gorenstein
properties; see \cite{Eis,GLS}.
\end{greenrevision}

\subsection{Integral closure and nilpotence exponents}
\begin{definition}\label{def:integral-closure}
An element $u\in R$ is \emph{integral over} an ideal $Q$ if
$u^s+b_1u^{s-1}+\cdots+b_s=0$ for some $s\geq1$ and
$b_i\in Q^i$. The set of such elements is the integral closure
$\overline Q$. The ideal $Q$ is integrally closed if
$Q=\overline Q$. An ideal $J\subseteq Q$ is a \emph{reduction}
of $Q$ if $Q^{r+1}=JQ^r$ for some $r\geq0$.
In a Noetherian ring a reduction has the same integral closure as
the ideal it reduces; see \cite{HS}.
\end{definition}

\begin{definition}\label{BSe}
For $f\in\mathfrak m^2$ with $J_f$ $\mathfrak m$-primary, set
\[
e^{\mathrm{BS}}(f)=\min\{d\geq1:f^d\in J_f\}.
\]
This is the nilpotence index of $[f]$ in $R/J_f$, with value one
when $f\in J_f$.
\end{definition}
\begin{greenrevision}
Existence follows from the nilpotence of the maximal ideal of
the Artinian ring $R/J_f$. To obtain a dimension bound, we use
the following form of the Brian\c{c}on--Skoda theorem:
if $Q$ is generated by $r\geq1$ elements in a regular local ring
containing a field of characteristic zero, then
\[
\overline{Q^{r+q-1}}\subseteq Q^q\qquad(q\geq1);
\]
see \cite{Sko,LS} and \cite[Theorem 13.3.3]{HS}.
For a characteristic-zero formal power series with zero constant
term, one has $f\in\overline{J_f}$ by
\cite[Corollary 7.2.6]{HS}.
The valuative criterion for integral closure then gives
\[
v(f^n)=n\,v(f)\geq n\,v(J_f)=v(J_f^n)
\]
for every valuation in that criterion, where $v(Q)$ is the
minimum of the values of a finite set of generators of $Q$.
Consequently $f^n\in\overline{J_f^n}$.
Applying the displayed theorem in $R=k[[x_1,\ldots,x_n]]$
with $Q=J_f$, $r=n$, and $q=1$ yields
\[
f^n\in\overline{J_f^n}\subseteq J_f,
\qquad e^{\mathrm{BS}}(f)\leq n.
\]
This bound is not needed to prove the conductor noncontainment.
\end{greenrevision}

\begin{definition}\label{def:beta}
Following \cite{HNS}, define
\[
\beta_f=\max\{i\geq0:(J_f:f^i)+(f)=I_f\}.
\]
The maximum exists: $i=0$ satisfies the equality, whereas
$J_f:f^{e^{\mathrm{BS}}(f)}=R$ and $I_f\ne R$.
The ascending chain  $\cdots \mathrel{\textcolor{black}{\subseteq}} (J_f:f^i)\mathrel{\textcolor{black}{\subseteq}}  (J_f:f^{i+1})\mathrel{\textcolor{black}{\subseteq}} \cdots $ implies that Conjecture \ref{conj1} is equivalent to $\beta_f=0$.
\end{definition}

\subsection{Derivations and conormal modules}
For a $k$-algebra $B$ and a $B$-module $M$, a \emph{$k$-derivation}
is an additive map $\delta:B\to M$ vanishing on $k$ and satisfying
$\delta(xy)=x\delta(y)+y\delta(x)$.
The module of K\"ahler differentials $\Omega_{B/k}$ is generated
over $B$ by symbols $dx$, $x\in B$, with exactly these additive,
$k$-constant, and Leibniz relations. Its universal derivation
$d_{B/k}:B\to\Omega_{B/k}$ identifies
\[
\operatorname{Hom}_B(\Omega_{B/k},M)
\simeq\Der_k(B,M).
\]
Write $\Der_k(B)=\Der_k(B,B)$. 

For an ideal $H\subseteq B$, the \emph{conormal module} is
$H/H^2$, and the conormal sequence is right exact:
\[
H/H^2\longrightarrow\Omega_{B/k}\otimes_B B/H
\longrightarrow\Omega_{(B/H)/k}\longrightarrow0,
\qquad h\bmod H^2\longmapsto dh\otimes1.
\]
The first map need not be injective. Theorems below detect a
particular nonzero conormal image. Differentials commute with
localization, and for $k\subseteq L\subseteq E$ the transitivity
sequence gives a surjection
$\Omega_{E/k}\twoheadrightarrow\Omega_{E/L}$.
These facts and the defining universal property can be found in
\cite[Tag 00RM]{Stacks}.
\endgroup 
\end{revision}

\section{Exact zero divisors and the finite-dimensional trace}
\label{sec:finite-trace}

\begin{greenrevision}
Return to the isolated-singularity setting of Section~\ref{sec:background}.
Let $k$ be a field of characteristic zero. Set
$R=k[[x_1,\ldots,x_n]]$ and $\mathfrak m=(x_1,\ldots,x_n)$. Let $f\in\mathfrak m^2$
have an $\mathfrak m$-primary Jacobian ideal $J_f$. Put
\[
A=R/J_f,\qquad a=[f]\in A.
\]
Here $A$ is a finite-dimensional Artinian local $k$-algebra, and $[g]$
denotes the class of $g$ modulo $J_f$.
Write $\pi:R\to A$, $g\mapsto[g]$, for the natural quotient map.

\end{greenrevision}
\rev{For an element $b$ of a ring $B$, its annihilator is the ideal
$\Ann_B(b)=\{u\in B:ub=0\}$.}

\begin{greenrevision}
\begin{prop}\label{prop:quotient-equivalence}
In the preceding setting,
\begin{equation}\label{eq:colon-dictionary-background}
(J_f:f)/J_f=\Ann_A(a),\qquad (J_f,f)/J_f=aA.
\end{equation}
Consequently,
\[
J_f:f\nsubseteq(J_f,f)
\quad\Longleftrightarrow\quad
\Ann_A(a)\nsubseteq aA.
\]
\end{prop}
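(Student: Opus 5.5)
The plan is to compute both sides of \eqref{eq:colon-dictionary-background} directly from the quotient map $\pi:R\to A$, and then get the equivalence from the correspondence theorem. Both colon and sum ideals contain $J_f$: if $g\in J_f$ then $gf\in J_f$, and $J_f\subseteq(J_f,f)$ trivially. So their images under $\pi$ are the ideals $(J_f:f)/J_f$ and $(J_f,f)/J_f$ of $A$.

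For the first identity, take $g\in R$. Then $\pi(g)\in\Ann_A(a)$ exactly when $\pi(g)\pi(f)=\pi(gf)=0$, that is, when $gf\in J_f$, which is the condition $g\in J_f:f$. Since $\pi$ is surjective, every element of $\Ann_A(a)$ has such a representative $g$. This gives $\pi(J_f:f)=\Ann_A(a)$.

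For the second identity, the image of $J_f+fR$ under $\pi$ is $\pi(f)\pi(R)=aA$, again because $\pi$ is surjective.

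For the equivalence, recall that $\pi$ induces an inclusion-preserving bijection between ideals of $R$ containing $J_f$ and ideals of $A$. Both $J_f:f$ and $(J_f,f)$ contain $J_f$, so $J_f:f\subseteq(J_f,f)$ holds if and only if $\Ann_A(a)\subseteq aA$. Negating both sides gives the stated equivalence.

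The argument is elementary and I do not expect a real obstacle. The only point to state explicitly is that both ideals contain $J_f$, since this is what makes the correspondence theorem applicable in both directions.
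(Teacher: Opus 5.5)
Your proposal is correct and follows essentially the same route as the paper: both identify $\pi(J_f:f)=\Ann_A(a)$ and $\pi\bigl((J_f,f)\bigr)=aA$ using surjectivity of $\pi$ and the fact that both ideals contain $J_f$. The only difference is cosmetic: you invoke the correspondence theorem for the final equivalence, whereas the paper checks the two inclusions by hand via the equivalence $[g]\in aA\iff g\in(J_f,f)$.
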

\end{greenrevision}
\begin{proof}
\begin{greenrevision}
Write $K=J_f:f$ and $H=(J_f,f)=J_f+fR$. Since $J_f$ is an ideal,
every $g\in J_f$ satisfies $gf\in J_f$, so $J_f\subseteq K$;
also, $J_f\subseteq H$. The natural quotient map $\pi:R\to A$
has kernel $J_f$, and hence its restrictions to $K$ and $H$
both have kernel $J_f$. Thus, via $g+J_f\mapsto[g]$,
we identify $K/J_f$ and $H/J_f$ with the ideals $\pi(K)$ and $\pi(H)$ of $A$, respectively.

First, for every $g\in R$,
\[
[g]\in\Ann_A(a)
\quad\Longleftrightarrow\quad [g][f]=0
\quad\Longleftrightarrow\quad gf\in J_f
\quad\Longleftrightarrow\quad g\in K.
\]
The middle equivalence follows from $[g][f]=[gf]$ and $\ker\pi=J_f$.
Since $\pi$ is surjective, every element of $A$ has the form $[g]$.
The displayed equivalences therefore give $\pi(K)=\Ann_A(a)$,
proving the first equality.

Second, every element of $H$ has the form $q+fh$, with $q\in J_f$ and $h\in R$,
and $\pi(q+fh)=a[h]\in aA$. Conversely, for any $u\in A$,
surjectivity of $\pi$ gives $h\in R$ with $u=[h]$, and then
$au=\pi(fh)\in\pi(H)$. Hence $\pi(H)=aA$, proving the second equality.
The same argument also gives
\[
[g]\in aA
\quad\Longleftrightarrow\quad
g-fh\in J_f\text{ for some }h\in R
\quad\Longleftrightarrow\quad g\in H.
\]

Finally, we verify the equivalence of the two containments explicitly.
Suppose $K\subseteq H$. For any $u\in\Ann_A(a)$, choose $g\in K$
with $u=[g]$. Since $g\in H$, we have $u\in\pi(H)=aA$,
so $\Ann_A(a)\subseteq aA$. Conversely, suppose $\Ann_A(a)\subseteq aA$.
For every $g\in K$, we have $[g]\in\Ann_A(a)\subseteq aA$.
The preceding equivalences give $g\in H$, and thus $K\subseteq H$.
Consequently,
\postdisplaypenalty=10000
\[
J_f:f\subseteq(J_f,f)
\quad\Longleftrightarrow\quad
\Ann_A(a)\subseteq aA.
\]
\nopagebreak[4]
Negating both sides gives the equivalence of noncontainments in the proposition. \qedhere
\end{greenrevision}
\end{proof}

\begin{greenrevision}
We first give the Jordan reformulation and block-count formula,
then reduce a hypothetical containment to a self-exact square-zero element,
and finally establish the differential obstruction in the finite-dimensional case.

\end{greenrevision}
\begin{greenrevision}
\begin{prop}\label{prop:jordan-reformulation}
In the preceding setting, $a$ is nilpotent. Let $m_a:A\to A$ denote
the $k$-linear map $x\mapsto ax$, and let $b_1$ be the number of its
Jordan blocks of size one. Then
\[
\Ann_A(a)\not\subseteq aA
\quad\Longleftrightarrow\quad
m_a\text{ has a Jordan block of size one}.
\]
\begin{equation}\label{eq:jordan-count}
b_1=\dim_k\frac{\Ann_A(a)}{\Ann_A(a)\cap aA}
=2\dim_k(A/aA)-\dim_k(A/a^2A).
\end{equation}
\end{prop}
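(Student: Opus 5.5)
Nilpotence of $a$ comes first. Since $f\in\mathfrak m^2\subseteq\mathfrak m$, the class $a$ lies in the maximal ideal $\mathfrak m_A=\mathfrak m/J_f$ of the finite-dimensional local algebra $A$. By Nakayama's lemma the chain $\mathfrak m_A^i$ strictly decreases until it reaches zero, so $\mathfrak m_A$ is nilpotent and hence $a$ is nilpotent. (Alternatively, Definition~\ref{BSe} gives $a^{e^{\mathrm{BS}}(f)}=0$ directly.) Hence $m_a$ is a nilpotent operator, and over $k$ it has a Jordan normal form with all eigenvalues $0$. Choose a Jordan basis and decompose $A=\bigoplus_j V_j$ into cyclic blocks of sizes $s_j\ge1$. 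On a block of size $s$ with basis $e_1\mapsto e_2\mapsto\cdots\mapsto e_s\mapsto0$, the kernel is $ke_s$ and the image is $\operatorname{span}(e_2,\ldots,e_s)$.

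The main tool is this block-by-block description. Because kernel and image of a direct sum are the direct sums of the blockwise kernels and images, we get
\[
\Ann_A(a)\cap aA=\bigoplus_j\bigl(\ker m_a|_{V_j}\cap \operatorname{im} m_a|_{V_j}\bigr).
\]
On a block the intersection is $ke_s$ when $s\ge2$ and $0$ when $s=1$. Therefore $\dim_k\Ann_A(a)/(\Ann_A(a)\cap aA)$ counts exactly the blocks of size one, which gives $b_1$. The equivalence $\Ann_A(a)\not\subseteq aA\iff b_1\ge1$ follows at once, since noncontainment means exactly that this quotient is nonzero. The step to verify carefully is the claim that the intersection respects the direct sum. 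It does: an element of $\ker m_a$ has each component in the blockwise kernel, and an element of $\operatorname{im}m_a$ has each component in the blockwise image, because $m_a$ preserves each $V_j$.

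For the second formula in \eqref{eq:jordan-count}, use rank counts. Let $b_{\ge2}$ be the number of blocks of size at least two. Then
\[
\dim_k(A/aA)=\dim_k\ker m_a=b_1+b_{\ge2},
\]
since each block contributes exactly one to the kernel. Likewise $\dim_k(A/a^2A)=\dim_k\ker m_a^2$, and $\ker m_a^2$ restricted to a block of size $s$ has dimension $\min(s,2)$. Hence
\[
\dim_k(A/a^2A)=b_1+2b_{\ge2}.
\]
Subtracting gives $2\dim_k(A/aA)-\dim_k(A/a^2A)=2b_1+2b_{\ge2}-b_1-2b_{\ge2}=b_1$. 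There is no real obstacle here; the only point needing care is justifying the direct-sum compatibility above. Note also that none of this uses the Jacobian structure beyond finite-dimensionality and the nilpotence of $a$.
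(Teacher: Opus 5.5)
Your proof is correct and follows the paper's argument: both decompose $A$ into Jordan blocks of the nilpotent operator $m_a$, compare kernel and image blockwise (using that the intersection respects the direct sum), and count block contributions to get \eqref{eq:jordan-count}. The differences are cosmetic. You compute $\dim_k(A/aA)$ and $\dim_k(A/a^2A)$ via rank--nullity as $\dim_k\ker m_a$ and $\dim_k\ker m_a^2$, whereas the paper computes the cokernels $V/m_a(V)$ and $V/m_a^2(V)$ blockwise. You also give an explicit justification of the nilpotence of $a$, which the paper leaves implicit.
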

\end{greenrevision}
\begin{proof}
\begin{greenrevision}
We first prove the Jordan reformulation. Since $m_a(x)=ax$,
we have $\ker m_a=\Ann_A(a)$ and $\operatorname{im}m_a=aA$.
Thus we must determine whether $\ker m_a$ is contained in $\operatorname{im}m_a$.

Let $V$ be the invariant subspace corresponding to a Jordan block of size $s$ for $m_a$.
It has a basis $v_1,\ldots,v_s$ satisfying
\[
m_a(v_1)=0,\qquad m_a(v_i)=v_{i-1}\quad(2\le i\le s).
\]
For $x=\sum_{i=1}^s\alpha_i v_i\in V$, we have
$m_a(x)=\sum_{i=2}^s\alpha_i v_{i-1}$.
Linear independence of the basis vectors shows that $m_a(x)=0$
if and only if $\alpha_2=\cdots=\alpha_s=0$, so $\ker(m_a|_V)=kv_1$.

If $s=1$, the restriction of $m_a$ to $V$ is the zero map.
Hence $\ker(m_a|_V)=V\ne0$, whereas $m_a(V)=0$;
in this case the kernel is not contained in the image.
If $s\ge2$, then $m_a(V)=\operatorname{span}_k\{v_1,\ldots,v_{s-1}\}$.
In particular, $v_1=m_a(v_2)$ lies in the image, and therefore
$\ker(m_a|_V)=kv_1\subseteq m_a(V)$.

Since $A$ is the direct sum of these invariant subspaces,
the kernel and image of $m_a$ are the direct sums of the kernels and images on the individual blocks.
Thus $\ker m_a\subseteq\operatorname{im}m_a$ holds if and only if
the kernel on every block is contained in its image, or equivalently,
every block has size at least two. Negating this equivalence gives
$\Ann_A(a)\nsubseteq aA$ if and only if there is a Jordan block of size one.
\end{greenrevision}
\begin{greenrevision}
To verify \eqref{eq:jordan-count}, retain the preceding Jordan decomposition
\[
A=\bigoplus_{\nu=1}^{t}V_\nu,
\]
where $s_\nu$ is the size of $V_\nu$. This is a decomposition of $k$-vector spaces;
the $V_\nu$ need not be ideals of $A$. On a block $V$ of size $s$, the same basis gives
\[
m_a^2(V)=\operatorname{span}_k\{v_1,\ldots,v_{s-2}\},
\]
with this span understood to be zero when its upper index is less than one.
Thus $V/m_a(V)$ has basis $[v_s]$ and dimension one;
if $s\ge2$, then $V/m_a^2(V)$ has basis $[v_{s-1}],[v_s]$ and dimension two,
whereas its dimension is one when $s=1$. Hence
\[
\dim_k\bigl(V/m_a(V)\bigr)=1,\qquad
\dim_k\bigl(V/m_a^2(V)\bigr)=\min\{s,2\}.
\]
The preceding kernel and image calculation also shows that
$\ker(m_a|_V)/(\ker(m_a|_V)\cap m_a(V))$ has dimension one when $s=1$
and is zero when $s\ge2$.

Since $m_a$ preserves every block,
\[
\begin{aligned}
aA&=m_a(A)=\bigoplus_\nu m_a(V_\nu),\\
a^2A&=m_a^2(A)=\bigoplus_\nu m_a^2(V_\nu),\\
\Ann_A(a)&=\ker m_a=\bigoplus_\nu\ker(m_a|_{V_\nu}).
\end{aligned}
\]
Moreover, the direct-sum decomposition gives
\[
\ker m_a\cap m_a(A)
=\bigoplus_\nu\bigl(\ker(m_a|_{V_\nu})\cap m_a(V_\nu)\bigr).
\]
Consequently, all three quotients in \eqref{eq:jordan-count} decompose blockwise, and
\[
\begin{aligned}
\dim_k\frac{\Ann_A(a)}{\Ann_A(a)\cap aA}&=b_1,\\
\dim_k(A/aA)&=t,\\
\dim_k(A/a^2A)&=\sum_{\nu=1}^{t}\min\{s_\nu,2\}.
\end{aligned}
\]
Finally, each block of size one contributes one to $2\dim_k(A/aA)-\dim_k(A/a^2A)$,
and each block of size at least two contributes zero. Therefore
\[
2\dim_k(A/aA)-\dim_k(A/a^2A)
=\sum_{\nu=1}^{t}\bigl(2-\min\{s_\nu,2\}\bigr)=b_1,
\]
which proves \eqref{eq:jordan-count}. \qedhere
\end{greenrevision}
\end{proof}

\begin{greenrevision}
The preceding proposition gives the linear-algebraic form of the original problem.
Nilpotence alone does not exclude the possibility that all blocks have size at least two;
the additional constraint is the differential identity $d_{A/k}a=0$.
In the present Jacobian quotient setting, this identity is proved by the finite polynomial
truncation argument in Lemma~\ref{lem:finite-jet}.
We first establish the differential obstruction in the finite-dimensional case,
and subsequently extend it to arbitrary Noetherian quotient rings.
\end{greenrevision}

\rev{For a commutative $L$-algebra $B$,}
write
\[
d_{B/L}:B\longrightarrow\Omega_{B/L}
\]
for the universal K\"ahler derivation. When the algebra and base field
are determined by context, we abbreviate \(d_{B/L}(x)\) to \(dx\). Thus,
in Lemma~\ref{lem:finite-trace}, \(dc\) means \(d_{E/L}(c)\), whereas
\(d\varepsilon\) means \(d_{D/L}(\varepsilon)\); in
Theorem~\ref{thm:conormal-rigidity}, \(dc\) means \(d_{C/k}(c)\). For a
matrix \(X=(x_{ij})\) with entries in \(D\), set
\(dX=(d_{D/L}x_{ij})\), taking the differential entrywise.

\begin{revision}
\begin{definition}\label{def:exact-zero-divisor}
A pair of nonzero nonunits $(u,v)$ in a ring $B$ is an
\emph{exact pair of zero divisors} if
$\Ann_B(u)=vB$ and $\Ann_B(v)=uB$; see \cite[Section 1]{HenSeg}.
We call $c$ \emph{self-exact square-zero} if
$c\ne0$, $c^2=0$, and $\Ann_B(c)=cB$.
Equivalently, the complex with every map multiplication by $c$,
\[
\cdots\xrightarrow{c}B\xrightarrow{c}B\xrightarrow{c}B
\xrightarrow{c}\cdots ,
\]
is exact. For $H=cB$, the conormal module is $H/H^2=cB$, and
$B/cB\to cB$, $\bar x\mapsto cx$, is an isomorphism.
\end{definition}
\end{revision}

\begin{lemma}\label{lem:colon-exact}
Let \(A\) be a commutative ring and let \(0\ne a\in A\) be nilpotent.
If \(\Ann_A(a)\subseteq aA\), then
\[
\Ann_A(a)=a\Ann_A(a^2).
\]
Consequently, if \(C=A/(a^2)\) and \(c=a\bmod(a^2)\), then
\begin{equation}\label{eq:self-exact}
c\ne0,\qquad c^2=0,\qquad\Ann_C(c)=cC.
\end{equation}
\end{lemma}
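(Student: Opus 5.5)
We first prove the identity $\Ann_A(a)=a\Ann_A(a^2)$, and then read off the three properties of $c$ in $C=A/(a^2)$.

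\emph{The identity.} The inclusion $a\Ann_A(a^2)\subseteq\Ann_A(a)$ is immediate: if $ya^2=0$, then $(ay)a=0$. For the reverse inclusion, take $x\in\Ann_A(a)$. The hypothesis $\Ann_A(a)\subseteq aA$ lets us write $x=ay$ for some $y\in A$. Then $0=xa=a^2y$, so $y\in\Ann_A(a^2)$ and $x=ay\in a\Ann_A(a^2)$. No finiteness or nilpotence is needed for this step.

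\emph{The element $c$.} By construction $c^2=a^2\bmod(a^2)=0$.

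\emph{$c\neq0$.} This amounts to $a\notin a^2A$, and here nilpotence enters. Suppose $a=a^2z$. Iterating gives $a=a^{m}z^{m-1}$ for every $m\ge2$. Choosing $m$ with $a^m=0$ yields $a=0$, contradicting $a\ne0$.

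\emph{$\Ann_C(c)=cC$.} The inclusion $cC\subseteq\Ann_C(c)$ follows from $c^2=0$. Conversely, let $\bar w\in\Ann_C(c)$ with lift $w\in A$. Then $aw\in a^2A$, say $aw=a^2z$, so $a(w-az)=0$. Thus $w-az\in\Ann_A(a)\subseteq aA$, and hence $w\in aA$. Reducing modulo $(a^2)$ gives $\bar w\in cC$.

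None of these steps is a real obstacle. The only point requiring care is $c\neq0$, where nilpotence of $a$ is genuinely used; without it, an idempotent-like $a$ with $a=a^2z$ would fail. We also note that the identity $\Ann_A(a)=a\Ann_A(a^2)$ is not needed for the consequence: the direct argument above suffices, though the identity packages the same computation.
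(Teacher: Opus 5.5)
Your proof is correct and follows the paper's argument essentially step for step. You use the same two inclusions for $\Ann_A(a)=a\Ann_A(a^2)$ and the same lift-and-subtract argument for $\Ann_C(c)=cC$, invoking $\Ann_A(a)\subseteq aA$ directly where the paper cites the identity. The only cosmetic difference is in showing $c\ne0$: you iterate $a=a^2z$ to $a=a^mz^{m-1}=0$, whereas the paper writes $a(1-au)=0$ and uses that $1-au$ is a unit because $au$ is nilpotent.
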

\begin{proof}
If \(x\in\Ann_A(a)\), the assumption gives \(x=ay\), and then
\(a^2y=0\). Hence
\(\Ann_A(a)\subseteq a\Ann_A(a^2)\). Conversely, if
\(x=ay\in a\Ann_A(a^2)\), then \(a^2y=0\), so
\(ax=a^2y=0\). Thus \(x\in\Ann_A(a)\), proving the reverse inclusion.

If \(c=0\), then
\(a=a^2u\), so \(a(1-au)=0\). Since \(au\) is nilpotent, \(1-au\) is a
unit, contradicting \(a\ne0\). If \(c\bar x=0\), lift \(\bar x\) to
\(x\in A\). There is a \(y\) with \(ax=a^2y\), and hence
\[
x-ay\in\Ann_A(a)=a\Ann_A(a^2).
\]
Reduction modulo \(a^2A\) yields \(\bar x\in cC\). The reverse
inclusion can be checked directly: if \(\bar x=c\bar y\), then
\(c\bar x=c^2\bar y=0\), so \(\bar x\in\Ann_C(c)\). \qedhere
\end{proof}

\begin{greenrevision}
Lemma~\ref{lem:colon-exact} reduces the hypothetical containment to the existence
of a self-exact square-zero element. We now study the differential of such an element
in a finite-dimensional algebra. Sending $\varepsilon$ in the dual-number algebra to $c$
makes the finite-dimensional algebra containing $c$ a module over the dual numbers.
The next lemma shows that this module is free, so multiplication can be represented
by matrices, preparing the construction of the trace map below.

The following results apply to any finite-dimensional commutative algebra $E$
over a field $L$ of characteristic zero. In the contradiction argument for the preceding
Jacobian quotient, one can take $L=k$ and $E=C=A/(a^2)$.

\begin{lemma}\label{lem:dual-freeness}
Let $L$ be a field of characteristic zero, let $E$ be a finite-dimensional
commutative $L$-algebra, and suppose that $0\ne c\in E$ satisfies
$c^2=0$ and $\Ann_E(c)=cE$. Put $D=L[\varepsilon]/(\varepsilon^2)$
and make $E$ a $D$-algebra via $\varepsilon\mapsto c$.
Then $D\to E$ is injective, and $E$ is a free $D$-module of rank
$r=\dim_L(cE)>0$.
\end{lemma}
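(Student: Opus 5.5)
Injectivity of $D\to E$ is immediate. A general element $\alpha+\beta\varepsilon$ maps to $\alpha+\beta c$. If this is zero, multiplying by $c$ gives $\alpha c=0$ (since $c^2=0$), so $\alpha=0$ because $c\neq 0$. Then $\beta c=0$ forces $\beta=0$.

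For freeness, I will use a dimension count together with an explicit basis lifted from $E/cE$. Multiplication by $c$ gives the exact sequence of $L$-vector spaces
\[
0\longrightarrow \Ann_E(c)\longrightarrow E\xrightarrow{\;c\;} cE\longrightarrow 0 .
\]
Since $\Ann_E(c)=cE$, this yields $\dim_L E=2\dim_L(cE)=2r$. Here $r>0$ because $c\neq0$, so $c=c\cdot1\in cE$ is nonzero. It follows that $\dim_L(E/cE)=r$.

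Next, choose $e_1,\dots,e_r\in E$ whose images form an $L$-basis of $E/cE$. I claim that $e_1,\dots,e_r$ is a $D$-basis of $E$.

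\emph{Spanning.} Take $x\in E$ and write $x=\sum\alpha_ie_i+cy$. Expand $y$ in the same way, $y=\sum\beta_ie_i+cz$. Since $c^2=0$, we get $cy=\sum\beta_ice_i$, and hence $x=\sum(\alpha_i+\beta_i\varepsilon)e_i$.

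\emph{Independence.} Suppose $\sum(\alpha_i+\beta_i\varepsilon)e_i=0$. Reducing modulo $cE$ gives $\sum\alpha_i\bar e_i=0$, so every $\alpha_i=0$. This leaves $c\sum\beta_ie_i=0$, so $\sum\beta_ie_i\in\Ann_E(c)=cE$. Reducing modulo $cE$ once more gives every $\beta_i=0$.

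Alternatively, once spanning is known, independence follows from the count: the map $D^r\to E$ is a surjection between $L$-spaces of the same dimension $2r$, so it is an isomorphism.

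There is no serious obstacle. The only point needing care is that the hypothesis $\Ann_E(c)=cE$ is used in exactly two places: in the independence step and in the dimension count. The characteristic-zero assumption is not needed for this lemma.
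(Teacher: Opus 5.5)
Your proof is correct and follows essentially the same route as the paper. Both arguments exhibit an explicit $D$-basis by lifting an $L$-basis of $E/cE\cong cE$; the paper picks $u_i$ with $cu_i$ a basis of $cE$, which under $\Ann_E(c)=cE$ is the same choice as yours, and then checks spanning and independence by hand. The hypothesis $\Ann_E(c)=cE$ merely enters in different places: in your independence step and dimension count rather than in the paper's spanning step.
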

\end{greenrevision}
\begin{proof}
\begin{greenrevision}
Put $D=L[\varepsilon]/(\varepsilon^2)$ and let $D$ act on $E$
by sending $\varepsilon$ to $c$.
The induced map $D\to E$ is injective. Indeed, a relation
$\alpha+\beta c=0$ with $\alpha,\beta\in L$ gives
$\alpha=\beta=0$: if $\beta\ne0$, then $c$ would be a scalar,
whereas the only nilpotent scalar is zero.

Let $m_c$ denote multiplication by $c$. The assumptions give
\[
\ker(m_c)=cE=\operatorname{im}(m_c).
\]
Set $r=\dim_L(cE)>0$ and choose elements $u_1,\ldots,u_r$
such that $cu_1,\ldots,cu_r$ form an $L$-basis of $cE$.
For any $x\in E$, write $cx=\sum_i\alpha_i cu_i$.
Then
\[
x-\sum_i\alpha_i u_i\in\ker(m_c)=cE,
\]
so there exist $\beta_i\in L$ with
\[
x=\sum_i(\alpha_i+\beta_i c)u_i.
\]
Thus the $u_i$ generate $E$ as a $D$-module. If the right-hand
side is zero, multiplication by $c$ yields
$\sum_i\alpha_i cu_i=0$, hence $\alpha_i=0$ for every $i$.
The remaining relation $\sum_i\beta_i cu_i=0$ then forces
every $\beta_i=0$. Therefore the $u_i$ form a $D$-basis and
$E\simeq D^r$. \qedhere
\end{greenrevision}
\end{proof}

\begin{greenrevision}
The following lemma provides the trace map
and its properties needed in the main proof.

\begin{lemma}\label{lem:differential-trace-map}
In the setting of Lemma~\ref{lem:dual-freeness}, fix a $D$-basis of $E$
and let $\rho:E\to M_r(D)$ be the corresponding regular representation.
Write $d_D=d_{D/L}$ and $d_E=d_{E/L}$. The rule
\[
T\!\left(\sum_\nu b_\nu\,d_E x_\nu\right)
=\sum_\nu\tr\bigl(\rho(b_\nu)d_D\rho(x_\nu)\bigr)
\]
defines a $D$-linear map $T:\Omega_{E/L}\to\Omega_{D/L}$ satisfying
\[
T(c\omega)=0\quad(\omega\in\Omega_{E/L}),
\qquad T(d_Ec)=r\,d_D\varepsilon.
\]
\end{lemma}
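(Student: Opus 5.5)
Define $T$ through the universal property. Consider the map
\[
\delta:E\to\Omega_{D/L},\qquad \delta(x)=\tr\bigl(d_D\rho(x)\bigr),
\]
where $d_D$ is applied entrywise to the matrix $\rho(x)\in M_r(D)$. We first check that $\delta$ is an $L$-derivation, with $\Omega_{D/L}$ regarded as an $E$-module. $L$-linearity is clear, and $\delta(1)=\tr(d_D I)=0$. The Leibniz rule comes from multiplicativity of $\rho$ together with the entrywise Leibniz rule for $d_D$:
\[
d_D\bigl(\rho(x)\rho(y)\bigr)=\rho(x)\,d_D\rho(y)+d_D\rho(x)\,\rho(y).
\]
The trace is cyclic and $D$ is commutative, so
\[
\tr\bigl(d_D\rho(xy)\bigr)=\tr\bigl(\rho(x)d_D\rho(y)\bigr)+\tr\bigl(\rho(y)d_D\rho(x)\bigr).
\]
The right-hand side is not yet of the form $x\delta(y)+y\delta(x)$ for any $E$-module structure on $\Omega_{D/L}$. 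So instead of $\delta$, we build an $E$-module map from a larger target.

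Define $\Phi:E\otimes_L E\to\Omega_{D/L}$ by
\[
\Phi(b\otimes x)=\tr\bigl(\rho(b)\,d_D\rho(x)\bigr).
\]
Since $\Omega_{E/L}=I/I^2$, where $I$ is the kernel of multiplication $E\otimes_LE\to E$, with $b\,d_Ex\leftrightarrow b\otimes x-bx\otimes1$, it suffices to check two things. First, $\Phi(bx\otimes1)=0$, which holds because $d_D\rho(1)=0$. Second, $\Phi$ vanishes on $I^2$; equivalently, the formula respects the relation
\[
b\,d_E(xy)=bx\,d_Ey+by\,d_Ex.
\]
Applying $\Phi$ to both sides, this relation becomes
\[
\tr\bigl(\rho(b)\rho(x)d_D\rho(y)\bigr)+\tr\bigl(\rho(b)\,d_D\rho(x)\,\rho(y)\bigr)
=\tr\bigl(\rho(bx)d_D\rho(y)\bigr)+\tr\bigl(\rho(by)d_D\rho(x)\bigr).
\]
The second term on the left equals $\tr\bigl(\rho(y)\rho(b)d_D\rho(x)\bigr)$ by cyclicity. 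Then $\rho(y)\rho(b)=\rho(by)$ because $E$ is commutative, so both sides agree. Well-definedness on the presentation by generators $b\,dx$ (additivity, $L$-constants, Leibniz) then follows, since additivity and $d_E$ killing $L$ are clear. The step needing the most care is exactly this use of trace cyclicity together with commutativity of $E$, so that all $\rho$-images commute with each other.

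$D$-linearity holds because $\varepsilon$ acts on $E$ as $c$, and $\rho(c)=\varepsilon I$ since $\rho$ is $D$-linear. Hence
\[
T(\varepsilon\omega)=T(c\omega)=\varepsilon\,T(\omega)
\]
for $\omega=b\,d_Ex$, because $\tr\bigl(\varepsilon\rho(b)d_D\rho(x)\bigr)=\varepsilon\tr(\cdots)$.

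For the first identity we must show $\varepsilon\,T(\omega)=0$. It suffices that $\varepsilon\,d_D\varepsilon=0$ in $\Omega_{D/L}$. Indeed, $d_D(\varepsilon^2)=2\varepsilon\,d_D\varepsilon=0$, and characteristic zero lets us divide by $2$. Now $\Omega_{D/L}$ is generated over $D$ by $d_D\varepsilon$, and every entry of $d_D\rho(x)$ lies in $D\,d_D\varepsilon$. Hence $\varepsilon$ annihilates $\Omega_{D/L}$, and so $T(c\omega)=0$.

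For the second identity, $\rho(c)=\varepsilon I_r$, so $d_D\rho(c)=(d_D\varepsilon)I_r$. Therefore
\[
T(d_Ec)=\tr\bigl(\rho(1)\,(d_D\varepsilon)I_r\bigr)=r\,d_D\varepsilon.
\]
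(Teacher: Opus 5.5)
Your proof is correct and follows essentially the same route as the paper. You define $T$ on the generators $b\,d_Ex$, verify the coefficient-weighted Leibniz relation $\Phi(b\otimes xy)=\Phi(bx\otimes y)+\Phi(by\otimes x)$ using trace cyclicity together with $\rho(y)\rho(b)=\rho(by)$, and obtain $D$-linearity and $T(c\omega)=0$ from $\rho(c)=\varepsilon I_r$ and $\varepsilon\,d_D\varepsilon=0$.

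The differences are only presentational. The paper splits $\rho=P+\varepsilon Q$, writes $T(b\,d_Ex)=\tr(P(b)Q(x))\,d_D\varepsilon$, and works with the presentation by symbols $e_x$. You instead differentiate matrices entrywise and use $\Omega_{E/L}\simeq I/I^2$.
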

\end{greenrevision}
\begin{proof}
\begin{greenrevision}
\smallskip
\noindent\emph{Step 1: The matrix of multiplication by an element.}
Take the $D$-basis $u_1,\ldots,u_r$ fixed in the statement.
For $x\in E$, commutativity makes the multiplication map $m_x(v)=xv$
$D$-linear, and its matrix $\rho(x)=(r_{ij}(x))$ is determined by
\[
xu_j=\sum_{i=1}^r r_{ij}(x)u_i\qquad(1\le j\le r).
\]
Addition and composition of multiplication maps give
\[
\rho(x+y)=\rho(x)+\rho(y),\qquad
\rho(xy)=\rho(x)\rho(y),\qquad \rho(1)=I_r.
\]
For $\delta\in D$, scalar multiplication has matrix $\rho(\delta)=\delta I_r$,
where $D$ is identified with its image in $E$.
In particular, $\varepsilon$ acts as multiplication by $c$, so
\[
\rho(c)=\varepsilon I_r.
\]

\smallskip
\noindent\emph{Step 2: Differentials in the dual-number algebra.}
Write $d_D=d_{D/L}$ and $d_E=d_{E/L}$ to distinguish the
two universal derivations. Since $D=L[\varepsilon]/(\varepsilon^2)$,
the presentation of differentials of a quotient gives
\[
\Omega_{D/L}
=\frac{D\,d_D\varepsilon}{D(2\varepsilon\,d_D\varepsilon)}
\simeq L\,d_D\varepsilon.
\]
Indeed, the polynomial algebra has differential module freely
generated by $d\varepsilon$, and the relation $\varepsilon^2=0$
imposes $d(\varepsilon^2)=2\varepsilon\,d\varepsilon=0$.
As $2$ is invertible in $L$, the quotient is
$(D/(\varepsilon))\,d_D\varepsilon$.
It follows in particular that
\[
d_D\varepsilon\ne0,\qquad
\varepsilon\,d_D\varepsilon=0.
\]
For $\alpha,\beta\in L$, differentiation and the module action
therefore have the concrete forms
\[
d_D(\alpha+\beta\varepsilon)=\beta\,d_D\varepsilon,\qquad
(\alpha+\beta\varepsilon)d_D\varepsilon=\alpha\,d_D\varepsilon.
\]

\smallskip
\noindent\emph{Step 3: A concrete formula for the trace pairing.}
Decompose each matrix entry into its constant and $\varepsilon$ terms, writing
\[
\rho(x)=P(x)+\varepsilon Q(x),\qquad P(x),Q(x)\in M_r(L).
\]
Entrywise differentiation gives $d_D\rho(x)=Q(x)\,d_D\varepsilon$.
Since $\varepsilon\,d_D\varepsilon=0$, for $b,x\in E$ we have
\[
\Phi(b,x):=\tr\bigl(\rho(b)d_D\rho(x)\bigr)
=\tr\bigl(P(b)Q(x)\bigr)d_D\varepsilon.
\]
Here the trace of a matrix with entries in $\Omega_{D/L}$ is again the sum of its diagonal entries.
The proposed map is thus written as
\[
T\!\left(\sum_\nu b_\nu\,d_E x_\nu\right)
=\sum_\nu\Phi(b_\nu,x_\nu).
\]
Every differential form admits such a finite expression, but the expression need not be unique.
We next check that the result is independent of the expression chosen.

\smallskip
\noindent\emph{Step 4: Why this rule is well-defined.}
The module $\Omega_{E/L}$ is generated by $d_E x$,
subject to additivity in $x$, vanishing on $L$, and the
Leibniz relation. To define an additive map on this module
by a rule for $b\,d_E x$, we must respect additivity in $b$
and these relations with every coefficient $b\in E$.
In particular, the required Leibniz identity is
\[
\Phi(b,xy)=\Phi(by,x)+\Phi(bx,y).
\]

The maps $P,Q:E\to M_r(L)$ are $L$-linear, so $\Phi$
is additive in both arguments.
For $\lambda\in L$, one has
$\rho(\lambda)=\lambda I_r$, whence $Q(\lambda)=0$
and $\Phi(b,\lambda)=0$.
Comparing constant and $\varepsilon$ coefficients in
$\rho(xy)=\rho(x)\rho(y)$ gives
\[
\begin{aligned}
P(xy)&=P(x)P(y),\\
Q(xy)&=Q(x)P(y)+P(x)Q(y).
\end{aligned}
\]
It follows that
\[
\begin{aligned}
\Phi(b,xy)
&=\bigl\{\tr(P(b)Q(x)P(y))
       +\tr(P(b)P(x)Q(y))\bigr\}d_D\varepsilon\\
&=\bigl\{\tr(P(y)P(b)Q(x))
       +\tr(P(bx)Q(y))\bigr\}d_D\varepsilon\\
&=\bigl\{\tr(P(by)Q(x))
       +\tr(P(bx)Q(y))\bigr\}d_D\varepsilon\\
&=\Phi(by,x)+\Phi(bx,y).
\end{aligned}
\]
The first trace in the second line is obtained by cyclically
moving the last factor to the front. For ordinary matrices
$U,V$ over $L$,
\[
\tr(UV)=\sum_{i,j}U_{ij}V_{ji}
       =\sum_{i,j}V_{ji}U_{ij}=\tr(VU),
\]
which proves the cyclic identity by taking
$U=P(b)Q(x)$ and $V=P(y)$.
We also used $P(y)P(b)=P(yb)=P(by)$.
No commutation of $Q(x)$ with $P(y)$ is required.

These checks cover the defining relations with arbitrary
coefficients. Equivalently, start with the free $E$-module
on symbols $e_x$, $x\in E$, send $b e_x$ to $\Phi(b,x)$
additively, and quotient by the $E$-submodule generated by
\[
e_{x+y}-e_x-e_y,\qquad e_\lambda\ (\lambda\in L),\qquad
e_{xy}-y e_x-x e_y.
\]
Every $E$-multiple of these relations is killed by the
identities just verified. This quotient is $\Omega_{E/L}$,
so we have a well-defined additive map
\[
T:\Omega_{E/L}\longrightarrow\Omega_{D/L}.
\]

\smallskip
\noindent\emph{Step 5: Linearity over $D$ and the value at $dc$.}
Let $\delta=\alpha+\beta\varepsilon\in D$, acting on $E$
as $\alpha+\beta c$. Since $\rho(\delta)=\delta I_r$,
comparison of constant terms in $\rho(\delta b)=\delta\rho(b)$
gives $P(\delta b)=\alpha P(b)$.
Therefore
\[
\begin{aligned}
T(\delta b\,d_E x)
&=\tr(P(\delta b)Q(x))d_D\varepsilon\\
&=\alpha\tr(P(b)Q(x))d_D\varepsilon\\
&=\delta\,T(b\,d_E x).
\end{aligned}
\]
The last equality uses $\varepsilon\,d_D\varepsilon=0$.
By additivity, $T$ is $D$-linear on all of $\Omega_{E/L}$.
In particular, because $c$ is the image of $\varepsilon$,
\[
T(c\omega)=\varepsilon T(\omega)=0
\qquad\text{for every }\omega\in\Omega_{E/L}.
\]

On the other hand, $\rho(1)=I_r$ and
$\rho(c)=\varepsilon I_r$ say that $P(1)=I_r$ and $Q(c)=I_r$.
Thus
\begin{equation}\label{eq:trace-c}
T(d_E c)
=T(1\cdot d_E c)
=\tr(P(1)Q(c))d_D\varepsilon
=r\,d_D\varepsilon\ne0.
\end{equation}
The last inequality holds because $r>0$,
$\operatorname{char}L=0$, and $d_D\varepsilon\ne0$. \qedhere
\end{greenrevision}
\end{proof}

\begin{lemma}\label{lem:finite-trace}
Let \(L\) be a field of characteristic zero and let \(E\) be a
finite-dimensional commutative \(L\)-algebra. If \(0\ne c\in E\)
satisfies \(c^2=0\) and \(\Ann_E(c)=cE\), then
\[
d_{E/L}c\notin c\Omega_{E/L}.
\]
\end{lemma}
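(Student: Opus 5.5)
The plan is to combine Lemma~\ref{lem:dual-freeness} with the trace map of Lemma~\ref{lem:differential-trace-map}. Argue by contradiction: suppose $d_{E/L}c=c\omega$ for some $\omega\in\Omega_{E/L}$.

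First, the hypotheses $0\ne c$, $c^2=0$, and $\Ann_E(c)=cE$ are exactly those of Lemma~\ref{lem:dual-freeness}. That lemma lets us regard $E$ as a free module of rank $r=\dim_L(cE)>0$ over $D=L[\varepsilon]/(\varepsilon^2)$, with $\varepsilon\mapsto c$. Fixing a $D$-basis, Lemma~\ref{lem:differential-trace-map} supplies the $D$-linear map $T:\Omega_{E/L}\to\Omega_{D/L}$. It satisfies $T(c\omega)=0$ for every $\omega$ and $T(d_{E/L}c)=r\,d_D\varepsilon$.

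Applying $T$ to the assumed identity gives
\[
r\,d_D\varepsilon=T(d_{E/L}c)=T(c\omega)=0 .
\]
By Step~2 of the proof of Lemma~\ref{lem:differential-trace-map}, $\Omega_{D/L}\simeq L\,d_D\varepsilon$ with $d_D\varepsilon\ne0$. Since $r>0$ and $\operatorname{char}L=0$, the element $r\,d_D\varepsilon$ is nonzero, which is a contradiction. Hence $d_{E/L}c\notin c\Omega_{E/L}$.

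There is no real obstacle left at this stage. The substantive points were the well-definedness of $T$, which rests on the cyclic trace identity and the Leibniz check, and the freeness of $E$ over $D$; both are already established in the two preceding lemmas. The only thing to confirm is that the hypotheses match verbatim and that $c\omega$ really means $\varepsilon\cdot\omega$ under the $D$-module structure induced by $\varepsilon\mapsto c$, so that $D$-linearity yields $T(c\omega)=\varepsilon T(\omega)=0$.
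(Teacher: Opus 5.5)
Your proposal is correct and follows the paper's proof essentially verbatim. You obtain freeness of $E$ over $D=L[\varepsilon]/(\varepsilon^2)$ from Lemma~\ref{lem:dual-freeness}, then apply the $D$-linear trace map $T$ of Lemma~\ref{lem:differential-trace-map}, which kills $c\Omega_{E/L}$ but sends $d_{E/L}c$ to $r\,d_D\varepsilon\ne0$.
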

\begin{proof}
\begin{greenrevision}
Put $D=L[\varepsilon]/(\varepsilon^2)$, acting on $E$ via $\varepsilon\mapsto c$,
and write $r=\dim_L(cE)>0$. Lemma~\ref{lem:dual-freeness} gives $E\simeq D^r$.
By Lemma~\ref{lem:differential-trace-map} and \eqref{eq:trace-c}, there is a $D$-linear map $T$ with
\[
T(c\Omega_{E/L})=0,\qquad T(d_Ec)=r\,d_D\varepsilon\ne0.
\]
If $d_Ec$ belonged to $c\Omega_{E/L}$, applying $T$ would give both zero and a nonzero value,
a contradiction. This proves the lemma. \qedhere

\end{greenrevision}
\end{proof}

\begin{greenrevision}
\begin{example}[The trace calculation in a two-by-two matrix]
Take $E=L[t]/(t^4)$ and $c=t^2$.
Retain the notation $d_D=d_{D/L}$ and $d_E=d_{E/L}$
for the two universal derivations.
Then $\Ann_E(c)=cE$, and $E$ is free of rank two over
$D=L[\varepsilon]/(\varepsilon^2)$ via $\varepsilon\mapsto t^2$,
with basis $(1,t)$.
Multiplication by $t$ sends the first basis vector to $t$
and the second to $t^2=\varepsilon\cdot1$. Thus
\[
\rho(t)=
\begin{pmatrix}0&\varepsilon\\1&0\end{pmatrix},
\qquad
\rho(c)=\rho(t)^2=\varepsilon I_2,
\qquad
d_D\rho(t)=
\begin{pmatrix}0&d_D\varepsilon\\0&0\end{pmatrix}.
\]
In this example the trace map gives
\[
\begin{aligned}
T(d_Et)
&=\tr\bigl(d_D\rho(t)\bigr)=0,\\
T(t\,d_Et)
&=\tr\!\left(
\begin{pmatrix}0&\varepsilon\\1&0\end{pmatrix}
\begin{pmatrix}0&d_D\varepsilon\\0&0\end{pmatrix}
\right)=d_D\varepsilon.
\end{aligned}
\]
Consequently
$T(d_Ec)=T(2t\,d_Et)=2\,d_D\varepsilon\ne0$.
This calculation illustrates why the definition keeps the
coefficient $b$ inside $\rho(b)$ in $T(b\,d_E x)$.
In particular, $T(d_Et)=0$ does not imply $T(t\,d_Et)=0$;
the linearity established in the proof is over $D$.
\end{example}
\end{greenrevision}

\begin{remark}
\rev{The explicit basis construction is equivalent to saying that
the Jordan form of $m_c$ consists entirely of size-two blocks.
Once a basis is fixed, the differential obstruction is detected
by the trace of the regular representation. No basis-independence
of the auxiliary map $T$ is needed, and neither a Gorenstein nor
a complete-intersection hypothesis is required.}
\end{remark}

\begin{greenrevision}
The preceding finite-dimensional obstruction, combined with Lemma~\ref{lem:colon-exact},
rules out the hypothetical containment. More precisely, let $A$ be a nonzero finite-dimensional
commutative $k$-algebra, with $\operatorname{char}k=0$, and let $a\in A$ be nilpotent
with $d_{A/k}a=0$. If $a=0$, then $\Ann_A(a)=A\ne0$ whereas $aA=0$,
so the required noncontainment is immediate. If $a\ne0$, suppose that $\Ann_A(a)\subseteq aA$.
By Lemma~\ref{lem:colon-exact}, in the finite-dimensional quotient $C=A/(a^2)$,
the element $c=a\bmod(a^2)$ satisfies $c\ne0$, $c^2=0$, and $\Ann_C(c)=cC$.
The natural differential map
\[
\Omega_{A/k}\otimes_A C\longrightarrow\Omega_{C/k},\qquad
(d_{A/k}x)\otimes1\longmapsto d_{C/k}\bar x
\]
sends $d_{A/k}a=0$ to $d_{C/k}c=0$.
On the other hand, Lemma~\ref{lem:finite-trace}, with $L=k$ and $E=C$,
gives $d_{C/k}c\notin c\Omega_{C/k}$, a contradiction.
Hence $\Ann_A(a)\nsubseteq aA$.

For the Jacobian quotient at the beginning of this section, vanishing of the differential
is ensured by Lemma~\ref{lem:finite-jet}. Together with Proposition~\ref{prop:quotient-equivalence},
this gives $J_f:f\nsubseteq(J_f,f)$.
The next section extends the preceding finite-dimensional differential obstruction to Noetherian algebras.
\end{greenrevision}

\section{Conormal rigidity over Noetherian algebras}
\label{sec:noether-rigidity}

This section reduces the Noetherian case to the finite-dimensional one.
Localization at a minimal prime produces an Artinian local ring, and a
copy of its residue field inside the ring then places it within the
scope of Lemma~\ref{lem:finite-trace}. The following coefficient-field
lemma supplies the second step.
\begin{revision}
A \emph{coefficient field} of a local ring $(E,\mathfrak m,L)$
is a subfield mapping isomorphically to the residue field $L$.
We need such a field to contain the given copy of $k$, not merely
an abstract coefficient field. The next proof gives this relative
version directly; compare the Cohen structure theorem
\cite[Tag 0323]{Stacks}.
\end{revision}

\begin{lemma}\label{lem:coefficient-field}
Let \(k\) be a field of characteristic zero and let \((E,\mathfrak m,L)\)
be an Artinian local \(k\)-algebra with residue field
\(L=E/\mathfrak m\). Then the residue map \(E\twoheadrightarrow L\)
admits a \(k\)-algebra section.
\end{lemma}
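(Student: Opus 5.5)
We will construct the section by Zorn's lemma combined with Hensel-type lifting through the nilpotent maximal ideal. Two facts make this work: $\mathfrak m$ is nilpotent, and $L$ has characteristic zero, so it is separable over every subfield.

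\emph{Step 1: a maximal subfield.} Since $k$ maps injectively to $L$ ($k$ is a field and $E\ne0$), $k$ is a subfield of $E$ meeting $\mathfrak m$ trivially. Consider the set of subfields $K\subseteq E$ with $k\subseteq K$. Every such $K$ automatically meets $\mathfrak m$ trivially, because its nonzero elements are units. A union of a chain of such subfields is again one, so Zorn's lemma yields a maximal such field $K$. The composite $K\to E\to L$ is an injective $k$-embedding. It remains to show that it is surjective; then its inverse, followed by inclusion into $E$, is the desired $k$-algebra section.

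\emph{Step 2: $L$ is algebraic over the image $\bar K$.} Suppose some $\bar t\in L$ is transcendental over $\bar K$, and lift it to $t\in E$. For any nonzero polynomial $p\in K[X]$, the image of $p(t)$ in $L$ is $\bar p(\bar t)\ne0$, so $p(t)$ is a unit. Hence $K[t]\to E$ extends to $K(t)\subseteq E$, contradicting maximality.

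\emph{Step 3: $L=\bar K$, via Hensel lifting.} Suppose $\bar t\in L\setminus\bar K$, with minimal polynomial $\bar g$ over $\bar K$, and let $g\in K[X]$ be the corresponding monic polynomial. Since $\operatorname{char}=0$, $\bar g$ is separable, so $\bar g'(\bar t)\ne0$. Choose any lift $t_0$ and apply Newton iteration
\[
t_{j+1}=t_j-g(t_j)g'(t_j)^{-1},
\]
where $g'(t_j)$ is a unit because its residue is nonzero. We have $g(t_0)\in\mathfrak m$, and Taylor expansion gives $g(t_{j+1})\in\mathfrak m^{2^{j+1}}$. Since $\mathfrak m^N=0$, some $t=t_j$ satisfies $g(t)=0$ and reduces to $\bar t$. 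Then $K[t]\cong K[X]/(g)$, because $g$ is irreducible over $K\cong\bar K$ and the map $K[X]\to E$ kills $g$ but is nonzero. So $K[t]$ is a field strictly containing $K$, again contradicting maximality.

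The main obstacle is Step 3. We need the irreducible polynomial over $K$ to have an exact root in $E$, not merely one modulo $\mathfrak m$. This is exactly where nilpotence of $\mathfrak m$ (Artinian local) and separability (characteristic zero) are both used. The Newton estimate rests on the standard expansion $g(x+h)=g(x)+g'(x)h+h^2(\cdots)$, with $h=-g(t_j)g'(t_j)^{-1}\in\mathfrak m^{2^j}$.
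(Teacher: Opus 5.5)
Your proof is correct, and it uses the same two ingredients as the paper. First, a lift of an element whose residue is transcendental generates a rational function field inside $E$, because every nonzero polynomial in it has nonzero residue and is therefore a unit. Second, a simple root of a separable minimal polynomial lifts exactly by Newton iteration, since $\mathfrak m$ is nilpotent.

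Where you differ is in how the global section is assembled. The paper works as follows:
\begin{enumerate}
\item It fixes a transcendence basis $\mathcal T$ of $L/k$ and embeds $F=k(\mathcal T)$ by lifting $\mathcal T$.
\item It lifts each finite subextension $F(\theta)\subseteq L$ separately, using the primitive element theorem.
\item To glue these lifts into one map on $L$, it proves that the Newton lift over the chosen copy of $F$ is unique. This makes the embeddings compatible under inclusion, so their union is well defined.
\end{enumerate}

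You instead apply Zorn's lemma to subfields of $E$ containing $k$ and show that a maximal one maps onto $L$. This needs only the existence of a single lift at a time. It uses neither the primitive element theorem nor the uniqueness-of-roots argument. Infinite transcendence degree and infinite algebraic extensions are absorbed automatically by maximality.

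In exchange, the paper's route is more explicit. It shows that once lifts of $\mathcal T$ are chosen, the section is uniquely determined. Thus all the freedom in choosing a coefficient field containing $k$ lies in the transcendental part.
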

\begin{proof}
\begin{revision}
Choose a transcendence basis $\mathcal T$ of $L/k$ and lift each
$t\in\mathcal T$ to $\widetilde t\in E$. Evaluation at these lifts
embeds $k[\mathcal T]$ in $E$: a nonzero polynomial has nonzero
residue because $\mathcal T$ is algebraically independent.
Every such value is a unit in $E$. Hence the embedding extends
to $F=k(\mathcal T)\hookrightarrow E$ and lifts the given inclusion
$F\subseteq L$.

The extension $L/F$ is algebraic and separable. Let $F'\subseteq L$
be finite over $F$. By the primitive element theorem,
$F'=F(\theta)$ for a separable element $\theta$.
Let $P(T)\in F[T]$ be its monic minimal polynomial and
choose $u_0\in E$ reducing to $\theta$. Then
$P(u_0)\in\mathfrak m$ and $P'(u_0)$ is a unit.
The iteration
\[
u_{j+1}=u_j-P(u_j)P'(u_j)^{-1}
\]
satisfies $P(u_j)\in\mathfrak m^{2^j}$, as follows by expanding
$P(u_j+v)$ to first order and noting that the remaining terms
are divisible by $v^2$. The residue of every $u_j$ is $\theta$,
so all the inverses in this formula exist.
Since $\mathfrak m$ is nilpotent, a finite number of iterations
gives a root $u$ of $P$ lifting $\theta$.
Thus $F[T]/(P)\to E$, $T\mapsto u$, is an embedding lifting
$F'\subseteq L$.

This lift is unique over the chosen copy of $F$.
Indeed, two roots $u,v$ of $P$ with residue $\theta$ satisfy
$0=P(u)-P(v)=(u-v)U$, where $U$ reduces to
$P'(\theta)\ne0$, so $u=v$.
The same argument applied to the minimal polynomial of every
element shows that the embeddings of finite intermediate fields
are compatible under inclusion. Taking their union yields a
$k$-algebra section $L\hookrightarrow E$ of the residue map.
This also covers transcendence bases and algebraic extensions
of arbitrary cardinality. \qedhere
\end{revision}
\end{proof}

Once the coefficient field has been chosen, the localized Artinian ring
can be regarded as a finite-dimensional algebra over its residue field.
We can now lift the finite-dimensional obstruction of the preceding
section to Noetherian algebras.

\begin{theorem}\label{thm:conormal-rigidity}
Let \(k\) be a field of characteristic zero and let \(C\) be a
Noetherian commutative \(k\)-algebra. If \(0\ne c\in C\) satisfies
\[
c^2=0,\qquad\Ann_C(c)=cC,
\]
then
\[
\greenrev{dc\notin c\Omega_{C/k}.}
\]
In particular, \(dc\ne0\).
\end{theorem}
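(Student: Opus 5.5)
Suppose, for contradiction, that $dc=\sum_i c\,b_i\,dx_i$ in $\Omega_{C/k}$ for finitely many $b_i,x_i\in C$. The plan is to localize at a suitable minimal prime, so that this relation passes to an Artinian local ring where Lemma~\ref{lem:finite-trace} applies.

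\emph{Choice of prime.} First I choose a prime $\mathfrak p$ with $c/1\neq0$ in $C_{\mathfrak p}$ and $\mathfrak p$ minimal. Since $c\ne0$, $\Ann_C(c)=cC$ is a proper ideal (as $c$ is nilpotent, $cC\ne C$). Take $\mathfrak p$ to be a prime minimal over $\Ann_C(c)$; it is an associated prime of $C/\Ann_C(c)\cong cC\subseteq C$, hence an associated prime of $C$. Since $c\in\Ann_C(c)$ and $c$ is nilpotent, $c$ lies in every prime. I would rather take $\mathfrak p$ to be a minimal prime of $C$ in the support of $cC$. Because $\mathfrak p$ is minimal over $\Ann(c)$, it lies in $\operatorname{Supp}(cC)$. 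To see that it is a minimal prime of $C$, note that any prime contains the nilpotent $c$, so $\operatorname{Supp}(cC)=V(\Ann c)=V(cC)=\operatorname{Spec}C$. Hence any minimal prime of $C$ lies in the support of $cC$, and I simply pick one.

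\emph{Localization.} Then $E=C_{\mathfrak p}$ is Noetherian of dimension zero, i.e.\ Artinian local, and $c/1\ne0$. Localization is exact, so $\Ann_E(c)=(\Ann_C c)_{\mathfrak p}=cE$ and $c^2=0$. Differentials commute with localization, so $\Omega_{E/k}=(\Omega_{C/k})_{\mathfrak p}$ and the hypothetical relation persists: $d_{E/k}c\in c\Omega_{E/k}$.

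\emph{Passage to a finite-dimensional algebra.} By Lemma~\ref{lem:coefficient-field}, choose a $k$-algebra section $L\hookrightarrow E$ of the residue map. Then $E$ is an $L$-algebra whose $L$-structure extends its $k$-structure. Since $E$ is Artinian with residue field $L$, it has finite length, so $\dim_LE<\infty$ via the filtration $\mathfrak m^i/\mathfrak m^{i+1}$, whose layers are finite-dimensional over $E/\mathfrak m=L$. The transitivity surjection $\Omega_{E/k}\twoheadrightarrow\Omega_{E/L}$ is $E$-linear and sends $d_{E/k}x$ to $d_{E/L}x$. Applying it, we obtain $d_{E/L}c\in c\Omega_{E/L}$, which contradicts Lemma~\ref{lem:finite-trace} applied to $E$ over $L$, with $\operatorname{char}L=0$ since $k\subseteq L$. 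The final assertion $dc\ne0$ is immediate, because $0\in c\Omega_{C/k}$.

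\emph{Main obstacle.} The step needing the most care is the choice of $\mathfrak p$: it must preserve both $c\ne0$ and the exactness $\Ann(c)=cC$ while producing an Artinian ring. The support argument above, together with exactness of localization, handles both. The coefficient field containing $k$ has already been supplied by Lemma~\ref{lem:coefficient-field}.
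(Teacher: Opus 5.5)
Your proposal is correct and follows the paper's proof essentially step for step. You localize at a minimal prime, check that $c/1\neq0$ and $\Ann_E(c)=cE$ survive, choose a coefficient field containing $k$ via Lemma~\ref{lem:coefficient-field}, pass to $\Omega_{E/L}$ through the transitivity surjection, and contradict Lemma~\ref{lem:finite-trace}. The one difference is cosmetic: you get $c/1\neq0$ from $\operatorname{Supp}(cC)=V(cC)=\operatorname{Spec}C$, whereas the paper argues directly that $sc=0$ forces $s\in cC\subseteq\mathfrak p$. Your opening detour through primes minimal over $\Ann_C(c)$ is unnecessary, since these are exactly the minimal primes of $C$.
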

\begin{proof}
\begin{greenrevision}
Suppose $d_{C/k}c=c\omega$. Choose a minimal prime
$\mathfrak p$ of $C$ and put $E=C_{\mathfrak p}$.
Prime ideals of $E$ correspond to prime ideals of $C$
contained in $\mathfrak p$. Minimality therefore makes
$\mathfrak pE$ the only prime ideal of $E$.
Thus $E$ is zero-dimensional and Noetherian, hence Artinian
local.

Write $c_E=c/1$. This element remains nonzero: if $c_E=0$,
there would be $s\notin\mathfrak p$ with $sc=0$, whereas
\[
s\in\Ann_C(c)=cC\subseteq\mathfrak p.
\]
The last inclusion holds because $c^2=0$, so every prime
contains $c$. Clearly $c_E^2=0$. To check the annihilator
after localization, suppose $(x/s)c_E=0$.
Then $txc=0$ for some $t\notin\mathfrak p$, so
$tx\in cC$ and $x/s=(tx)/(ts)\in c_EE$.
The reverse inclusion follows from $c_E^2=0$. Hence
\[
c_E\ne0,\qquad c_E^2=0,\qquad\Ann_E(c_E)=c_EE.
\]

Let $L=E/\mathfrak m_E$. Lemma~\ref{lem:coefficient-field}
gives a section $L\hookrightarrow E$ over $k$.
There exists $N$ with $\mathfrak m_E^N=0$.
Each quotient $\mathfrak m_E^i/\mathfrak m_E^{i+1}$ is a
finite $E$-module annihilated by $\mathfrak m_E$, hence a
finite-dimensional $L$-vector space. The coefficient-field
section makes the entire filtration a filtration of
$L$-vector spaces, and therefore
\[
\dim_L E
=\sum_{i=0}^{N-1}
 \dim_L(\mathfrak m_E^i/\mathfrak m_E^{i+1})<\infty.
\]
In particular, $E$ satisfies the finiteness hypothesis of
Lemma~\ref{lem:finite-trace}, even if $L/k$ has infinite degree.

Differentials commute with localization:
\[
\Omega_{C/k}\otimes_C E\simeq\Omega_{E/k}.
\]
Thus the assumed equality becomes
$d_{E/k}c_E=c_E\omega_E$.
The coefficient field contains the given copy of $k$, so
the transitivity sequence supplies a surjection
\[
\Omega_{E/k}\longrightarrow\Omega_{E/L},
\qquad d_{E/k}x\longmapsto d_{E/L}x.
\]
It sends this equality to $d_{E/L}c_E=c_E\omega'$,
contradicting Lemma~\ref{lem:finite-trace}.

Therefore the assumption $d_{C/k}c\in c\Omega_{C/k}$ is false, and hence
\[
d_{C/k}c\notin c\Omega_{C/k}.
\]
Since $0\in c\Omega_{C/k}$, in particular $d_{C/k}c\ne0$, which proves the theorem. \qedhere
\end{greenrevision}
\end{proof}

\begin{revision}
\begin{remark}\label{rem:conormal-image}
For $H=cC$ as in Theorem~\ref{thm:conormal-rigidity}, the
conormal map sends the generator $c\in H/H^2$ to the nonzero
class of $dc$ in $\Omega_{C/k}/c\Omega_{C/k}$.
The theorem asserts nonvanishing of this class, not injectivity
of the entire conormal map.
\end{remark}
\end{revision}

\section{Ideal quotient theorems}\label{sec:colon-theorems}

Theorem~\ref{thm:conormal-rigidity} concerns the differential of a
self-exact square-zero element. We now translate it back into \greenrev{an}
ideal quotient statement. If the desired noncontainment were to fail,
Lemma~\ref{lem:colon-exact} would produce such a square-zero element in
a quotient ring, while differential vanishing would contradict
conormal rigidity. This proves the first main theorem.

\begin{theorem}\label{thm:noetherian-colon}
Let \(k\) be a field of characteristic zero, let \(R\) be a commutative
\(k\)-algebra, let \(I\subsetneq R\), and assume that \(A=R/I\) is
Noetherian. For \(f\in R\), write \(a=[f]\in A\). If \(a\) is
nilpotent and \(d_{A/k}a=0\), then
\[
I:f\not\subseteq(I,f).
\]
\end{theorem}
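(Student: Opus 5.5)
The plan is to translate the ideal statement into one about annihilators in $A$, and then to argue by contradiction through Lemma~\ref{lem:colon-exact} and Theorem~\ref{thm:conormal-rigidity}.

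First, the dictionary of Proposition~\ref{prop:quotient-equivalence} carries over verbatim to any ideal $I$. Its proof used only that $\pi:R\to A=R/I$ is surjective with kernel $I$. It gives
\[
(I:f)/I=\Ann_A(a),\qquad (I,f)/I=aA,
\]
and moreover $I:f\subseteq(I,f)$ holds if and only if $\Ann_A(a)\subseteq aA$. Since $I\ne R$, the ring $A$ is nonzero. If $a=0$, then $\Ann_A(a)=A\ne0=aA$, and the noncontainment is immediate. So I will assume $a\ne0$ and suppose, for contradiction, that $\Ann_A(a)\subseteq aA$.

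Next, apply Lemma~\ref{lem:colon-exact} to the nonzero nilpotent $a$. Put $C=A/(a^2)$ and $c=a\bmod(a^2)$; the lemma gives $c\ne0$, $c^2=0$ and $\Ann_C(c)=cC$. Since $C$ is a quotient of the Noetherian ring $A$, it is a Noetherian commutative $k$-algebra. Theorem~\ref{thm:conormal-rigidity} therefore applies and yields $d_{C/k}c\notin c\Omega_{C/k}$, and in particular $d_{C/k}c\ne0$.

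Finally, I will contradict this using the hypothesis $d_{A/k}a=0$. Composing the quotient map $A\to C$ with $d_{C/k}$ gives a $k$-derivation $A\to\Omega_{C/k}$. By the universal property there is an $A$-linear map $\Omega_{A/k}\to\Omega_{C/k}$ sending $d_{A/k}x$ to $d_{C/k}\bar x$; this is the first map of the conormal sequence after tensoring with $C$. It sends $d_{A/k}a=0$ to $d_{C/k}c$, so $d_{C/k}c=0$, which is the desired contradiction. Hence $\Ann_A(a)\nsubseteq aA$, and the dictionary gives $I:f\nsubseteq(I,f)$.

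There is no serious obstacle left, since the real work sits in Theorem~\ref{thm:conormal-rigidity}. The points that need care are the following.
\begin{itemize}
\item The dictionary must be checked for a general, possibly non-Noetherian, $R$. Only the surjectivity of $\pi$ is used, so this is fine.
\item The case $a=0$ must be separated out, because Lemma~\ref{lem:colon-exact} assumes $a\ne0$.
\item The Noetherian hypothesis must pass to $C$, which holds because $C$ is a quotient of $A$.
\end{itemize}
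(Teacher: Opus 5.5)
Your proposal is correct and follows the paper's proof essentially step for step. The case $a=0$ is settled directly; otherwise the hypothetical containment becomes, via Lemma~\ref{lem:colon-exact}, a self-exact square-zero element $c\in C=A/(a^2)$. Then $d_{A/k}a=0$ pushes forward to $d_{C/k}c=0$, which contradicts Theorem~\ref{thm:conormal-rigidity}. One harmless slip: the map $\Omega_{A/k}\otimes_A C\to\Omega_{C/k}$ is the surjective second map of the conormal sequence, not the first.
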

\begin{proof}
If \(a=0\), then \(I:f=R\), while \((I,f)=I\ne R\). Assume that
\(a\ne0\) and, toward a contradiction, that
\(I:f\subseteq(I,f)\). Modulo \(I\), this is
\(\Ann_A(a)\subseteq aA\). Lemma~\ref{lem:colon-exact} produces, in
\(C=A/(a^2)\), an element \(c=a\bmod(a^2)\) satisfying
\[
c\ne0,\qquad c^2=0,\qquad\Ann_C(c)=cC.
\]
\begin{greenrevision}
The quotient map $A\to C$ induces the natural surjection
\[
\Omega_{A/k}\otimes_A C\longrightarrow\Omega_{C/k},
\qquad da\otimes1\longmapsto dc.
\]
The hypothesis $da=0$ therefore implies $dc=0$.
The ring $C$ is Noetherian because it is a quotient of $A$,
and $c\ne0$ ensures that $C$ is nonzero. Thus all the
hypotheses of Theorem~\ref{thm:conormal-rigidity} apply,
giving the required contradiction. \qedhere
\end{greenrevision}
\end{proof}

The preceding theorem produces an element of the ideal quotient outside
\((I,f)\). Over a local ring, we now show that this element can be chosen
with nonzero class modulo the maximal ideal times the ideal quotient.
Nakayama's lemma gives the following strengthening.

\begin{theorem}\label{thm:local-strengthening}
Under the assumptions of Theorem~\ref{thm:noetherian-colon}, suppose
that \((R,\mathfrak n)\) is local and put \(K=I:f\). Then
\[
\greenrev{K\not\subseteq(I,f)+\mathfrak nK.}
\]
\end{theorem}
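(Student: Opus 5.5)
The plan is to reduce the hypothetical containment $K\subseteq(I,f)+\mathfrak nK$ to the plain containment $K\subseteq(I,f)$ by Nakayama's lemma over the Noetherian local ring $A=R/I$. That plain containment is excluded by Theorem~\ref{thm:noetherian-colon}.

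First, since $I\subsetneq R$ and $R$ is local, $I\subseteq\mathfrak n$. Hence $A$ is local with maximal ideal $\mathfrak m_A=\mathfrak n/I$, and it is Noetherian by hypothesis. Exactly as in Proposition~\ref{prop:quotient-equivalence} (whose argument uses only that $\pi:R\to A$ is surjective with kernel $I$), we have $K/I=\Ann_A(a)$ and $(I,f)/I=aA$. Moreover, $I\subseteq K$ because $I$ is an ideal. Therefore the image of $(I,f)+\mathfrak nK$ in $A$ is $aA+\mathfrak m_A\Ann_A(a)$, since $\pi(\mathfrak nK)=\pi(\mathfrak n)\pi(K)$.

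Suppose, toward a contradiction, that $K\subseteq(I,f)+\mathfrak nK$. Passing to $A$ gives
\[
\Ann_A(a)\subseteq aA+\mathfrak m_A\Ann_A(a).
\]
Consider the $A$-module
\[
M=\Ann_A(a)\big/\bigl(\Ann_A(a)\cap aA\bigr).
\]
It is finitely generated because $A$ is Noetherian and $\Ann_A(a)$ is an ideal. I would check $M=\mathfrak m_AM$ as follows. Take $x\in\Ann_A(a)$ and write $x=y+z$ with $y\in aA$ and $z\in\mathfrak m_A\Ann_A(a)$. Then $y=x-z\in\Ann_A(a)\cap aA$, so $\bar x=\bar z\in\mathfrak m_AM$. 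Nakayama's lemma over the local ring $A$ then gives $M=0$, that is, $\Ann_A(a)\subseteq aA$.

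Lifting back to $R$ via the dictionary above, this says $K\subseteq(I,f)$. That contradicts Theorem~\ref{thm:noetherian-colon}, whose hypotheses (nilpotent $a$, $d_{A/k}a=0$, $A$ Noetherian, $I\ne R$) are exactly those assumed here. The case $a=0$ needs no separate treatment, since Theorem~\ref{thm:noetherian-colon} covers it.

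There is no serious obstacle. The only points needing care are that $\mathfrak m_A\Ann_A(a)$ really is the image of $\mathfrak nK$, and that Nakayama's lemma is applied over $A$ rather than $R$. The latter matters because $R$ itself need not be Noetherian, so $K$ need not be finitely generated over $R$, whereas $\Ann_A(a)$ is finitely generated over $A$.
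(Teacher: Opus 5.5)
Your proof is correct and follows essentially the same route as the paper: both apply Nakayama's lemma over the Noetherian local ring $A=R/I$ to the finitely generated module $\Ann_A(a)/(\Ann_A(a)\cap aA)$, whose nonvanishing comes from Theorem~\ref{thm:noetherian-colon}. The paper argues directly, identifying $N/\mathfrak m_AN$ with $K/((K\cap(I,f))+\mathfrak nK)$, whereas you argue by contradiction; this difference is only cosmetic.
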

\begin{proof}
\begin{greenrevision}
Let $\mathfrak m_A=\mathfrak n/I$ be the maximal ideal of
$A=R/I$, and put
\[
M=K/I=\Ann_A(a),\qquad
N=(M+aA)/aA\simeq M/(M\cap aA).
\]
Theorem~\ref{thm:noetherian-colon} gives $N\ne0$.
Since $A$ is Noetherian, its ideal $M$ is finitely generated;
the same is true of its quotient $N$.

The natural map $K\to N$ sends $g$ to
$(g+I)+aA$. It is surjective and has kernel $K\cap(I,f)$.
Under this map the image of $\mathfrak nK$ is exactly
$\mathfrak m_A N$: multiplication by $r\in\mathfrak n$
on a lift in $K$ induces multiplication by $r+I$ on $N$.
Consequently
\[
N/\mathfrak m_A N
\simeq
\frac{K}{(K\cap(I,f))+\mathfrak nK}.
\]
Nakayama's lemma \cite[Proposition 2.8]{AM} shows that
$N/\mathfrak m_A N\ne0$, since $N$ is nonzero and finite.
Finally, $\mathfrak nK\subseteq K$ implies
\[
K\cap\bigl((I,f)+\mathfrak nK\bigr)
=(K\cap(I,f))+\mathfrak nK.
\]
The nonzero quotient therefore yields an element of $K$
outside $(I,f)+\mathfrak nK$, as required.
Only $M$ and $N$ are required to be finite over $A$;
the argument does not assume that $K$ is finite over $R$. \qedhere
\end{greenrevision}
\end{proof}

\begin{revision}
\begin{remark}\label{rem:minimal-generator}
Writing $K=I:f$ and $\kappa=R/\mathfrak n$, the local theorem
is equivalent to the nonvanishing of the $\kappa$-vector space
\[
\frac{K}{(K\cap(I,f))+\mathfrak nK}.
\]
Indeed, $K\cap((I,f)+\mathfrak nK)
=(K\cap(I,f))+\mathfrak nK$ because $\mathfrak nK\subseteq K$.
An element $g$ with nonzero image in this quotient has nonzero
class in $K/\mathfrak nK$. If $K$ is finitely generated, in
particular if $R$ is Noetherian, extend that class to a
$\kappa$-basis and lift the basis; Nakayama's lemma shows that
the lifts form a minimal generating set containing $g$.
When only $R/I$ is assumed Noetherian, $K$ itself need not be
finitely generated, so the vector-space formulation is the
appropriate statement.
\end{remark}

\begin{example}[The characteristic-zero hypothesis is essential]
Let $\operatorname{char}k=p>0$, set $A=k[t]/(t^{2p})$, and take
$a=t^p$. Then $a\ne0$, $a^2=0$, and $da=pt^{p-1}dt=0$, but
$\Ann_A(a)=aA$. In fact, $at^i=0$ exactly when $i\geq p$.
Thus the abstract annihilator theorem cannot be extended
unchanged to positive characteristic.
\end{example}
\end{revision}

\section{Jacobian ideals and critical loci}\label{sec:applications}

We now return to the original Jacobian-ideal problem from
Section~\ref{sec:background}. To apply
Theorem~\ref{thm:local-strengthening} to
\[
A=k[[x_1,\ldots,x_n]]/J_f,\qquad a=[f],
\]
we must verify that \(A\) is Noetherian, that \(a\) is nilpotent, and
that \(d_{A/k}a=0\). When \(J_f\) is \(\mathfrak m\)-primary, the first
two statements follow because \(A\) is Artinian local. Differential
vanishing requires a separate argument.

One cannot, without further justification, write the differential of a
formal power series as \(df=\sum_i(\partial_i f)\,dx_i\). We use ordinary
K\"ahler differentials, and ordinary derivations need not be continuous
for the \(\mathfrak m\)-adic topology, so the infinite-series formula is
not directly available. Instead, modulo \(J_f\), we replace \(f\) by a
finite polynomial truncation, for which the usual finite
differentiation formula applies. The next lemma carries this out.

\begin{lemma}\label{lem:finite-jet}
Let \(R=k[[x_1,\ldots,x_n]]\), assume that \(J_f\) is
\(\mathfrak m\)-primary, and set \(A=R/J_f\) and \(a=[f]\in A\).
Then \(d_{A/k}a=0\).
\end{lemma}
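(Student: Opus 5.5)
Since $J_f$ is $\mathfrak m$-primary and $R$ is Noetherian local, there is $N\ge1$ with $\mathfrak m^N\subseteq J_f$. The plan is to replace every formal power series that appears by a polynomial, changing it only by an element of $\mathfrak m^{N}$. For polynomials the finite Leibniz formula holds inside ordinary K\"ahler differentials, so no continuity of derivations is needed.

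\textbf{Truncations.} Write $f=F+h$, where $F\in k[x_1,\ldots,x_n]$ is the truncation of $f$ in degrees $\le N$, so that $h\in\mathfrak m^{N+1}$. Then $\partial_if=\partial_iF+\partial_ih$ with $\partial_ih\in\mathfrak m^N\subseteq J_f$. Hence $[\partial_iF]=[\partial_if]=0$ in $A$ for every $i$. Moreover $[f]=[F]$ in $A$, because $h\in\mathfrak m^{N+1}\subseteq J_f$.

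\textbf{Computing the differential.} Let $\xi_i=[x_i]\in A$. The composite $k[x_1,\ldots,x_n]\to R\to A$ is a $k$-algebra map, so the universal derivation gives the finite chain-rule identity
\[
d_{A/k}[F]=\sum_{i=1}^n[\partial_iF]\,d_{A/k}\xi_i .
\]
This follows from the Leibniz rule applied monomial by monomial together with additivity, since $F$ is a finite sum. Every coefficient $[\partial_iF]$ vanishes, so $d_{A/k}a=d_{A/k}[F]=0$.

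\textbf{Main obstacle.} The only delicate point is that the identity $[f]=[F]$ must be an equality of elements of $A$ itself, not an approximation in some completion. This is why we truncate at a degree where the tail lies in $J_f$, rather than differentiating $f$ termwise. Using $\mathfrak m^{N+1}$ for $h$ ensures that both $h$ and its partials lie in $J_f$. No further estimate is needed.
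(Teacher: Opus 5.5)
Your proposal is correct and follows essentially the same route as the paper. Both proofs truncate $f$ at degree $N$ with $\mathfrak m^N\subseteq J_f$, note that the tail and its partials lie in $J_f$, and then apply the finite chain rule to the polynomial truncation in ordinary K\"ahler differentials.
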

\begin{proof}
\greenrev{Choose $N$ with $\mathfrak m^N\subseteq J_f$.
Let $P$ be the sum of all terms of $f$ of total degree at
most $N$. Formal differentiation lowers the order of the
discarded tail by at most one. Thus}
\[
f-P\in\mathfrak m^{N+1}\subseteq J_f,\qquad
\partial_i f-\partial_iP\in\mathfrak m^N\subseteq J_f.
\]
Since \(\partial_i f\in J_f\), every \(\partial_iP\) vanishes in \(A\).
The elements \(P\) and \(f\) represent the same class in \(A\), so
\[
d_{A/k}a=d_{A/k}[P]
=\sum_i[\partial_iP]\,d[x_i]=0.
\qedhere
\]
\end{proof}

Thus the abstract local ideal quotient theorem applies to the original formal
power-series setting and gives the required conclusion.

\begin{corollary}\label{cor:hns}
Let \(k\) be a field of characteristic zero and let
\(R=k[[x_1,\ldots,x_n]]\). \rev{If $f\in\mathfrak m^2$ and $J_f$ is $\mathfrak m$-primary, then}
\[
\greenrev{J_f:f\not\subseteq
(J_f,f)+\mathfrak m(J_f:f).}
\]
In particular, \(J_f:f\not\subseteq(J_f,f)\).
\end{corollary}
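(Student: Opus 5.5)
The plan is to apply Theorem~\ref{thm:local-strengthening} with $R=k[[x_1,\ldots,x_n]]$, $\mathfrak n=\mathfrak m$, $I=J_f$, and the given $f$. This requires checking the hypotheses of Theorem~\ref{thm:noetherian-colon} in turn.

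First, $I=J_f\subsetneq R$. Since $f\in\mathfrak m^2$, every $\partial_if$ lies in $\mathfrak m$, so $J_f\subseteq\mathfrak m\ne R$. Next, $A=R/J_f$ is Noetherian because it is a quotient of the Noetherian ring $R$. In fact, since $J_f$ is $\mathfrak m$-primary, $A$ is Artinian local with maximal ideal $\mathfrak m/J_f$.

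Second, $a=[f]$ is nilpotent. Because $f\in\mathfrak m^2\subseteq\mathfrak m$, its class lies in the maximal ideal of the Artinian local ring $A$. That ideal is nilpotent, since some power $\mathfrak m^N$ lies in $J_f$ by $\mathfrak m$-primariness. Hence $f^N\in J_f$. Third, $d_{A/k}a=0$ is exactly Lemma~\ref{lem:finite-jet}.

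With all hypotheses verified, Theorem~\ref{thm:local-strengthening} gives $K\not\subseteq(J_f,f)+\mathfrak mK$ for $K=J_f:f$, which is the displayed claim. The final assertion follows because $(J_f,f)\subseteq(J_f,f)+\mathfrak m(J_f:f)$. Any element of $J_f:f$ outside the larger ideal is therefore outside $(J_f,f)$.

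None of these steps is a real obstacle, since the substantive work is in Lemma~\ref{lem:finite-jet} and Theorem~\ref{thm:conormal-rigidity}. The one point needing care is that the differential vanishing uses ordinary Kähler differentials rather than continuous ones. That is precisely why we invoke Lemma~\ref{lem:finite-jet} instead of writing $df=\sum_i\partial_if\,dx_i$ directly.
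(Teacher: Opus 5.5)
Your proposal is correct and matches the paper's proof. Both verify that $A=R/J_f$ is Artinian local (hence Noetherian) with $[f]$ in its nilpotent maximal ideal, invoke Lemma~\ref{lem:finite-jet} for $d_{A/k}[f]=0$, and apply Theorem~\ref{thm:local-strengthening}. Your explicit checks that $J_f\ne R$ and that the weaker noncontainment follows from the stronger one spell out details the paper leaves implicit.
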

\begin{proof}
The algebra \(A=R/J_f\) is Artinian local, \rev{and
$[f]$ belongs to its maximal ideal because $f(0)=0$.}
Thus \([f]\) is nilpotent.
Apply Lemma~\ref{lem:finite-jet} and
Theorem~\ref{thm:local-strengthening}. \qedhere
\end{proof}

\begin{revision}
\begin{corollary}[Numerical consequences]\label{cor:numerical}
Under the hypotheses of Corollary~\ref{cor:hns}, set
$e=e^{\mathrm{BS}}(f)$. Then $\beta_f=0$ and
\[
\dim_k R/(J_f,f^2)<2\tau_f,\qquad
\mu_f\leq e(\tau_f-1)+1.
\]
If $f\notin J_f$, so $e\geq2$, then
\[
1<\frac{\mu_f}{\tau_f}<e\leq n.
\]
\end{corollary}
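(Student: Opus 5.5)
We work in the Artinian local algebra $A=R/J_f$ with $a=[f]$ and translate each claim into linear algebra of $m_a$ via Proposition~\ref{prop:jordan-reformulation}. First, $\beta_f=0$ is immediate: by Definition~\ref{def:beta}, $\beta_f\ge1$ would mean $(J_f:f)+(f)=I_f$, i.e.\ $J_f:f\subseteq(J_f,f)$, which Corollary~\ref{cor:hns} excludes.

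For the first inequality, observe that
\[
\dim_kR/(J_f,f)=\dim_k A/aA=\tau_f,
\qquad
\dim_kR/(J_f,f^2)=\dim_kA/a^2A .
\]
Formula \eqref{eq:jordan-count} then gives
\[
b_1=2\tau_f-\dim_kR/(J_f,f^2).
\]
Proposition~\ref{prop:jordan-reformulation} together with Corollary~\ref{cor:hns} yields $b_1\ge1$, and hence $\dim_kR/(J_f,f^2)<2\tau_f$.

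For the bound on $\mu_f$, the number of Jordan blocks of $m_a$ is $t=\dim_kA/aA=\tau_f$, and each block has size at most $e$ because $a^e=0$. At least one block has size one, so
\[
\mu_f=\dim_kA=\sum_\nu s_\nu\le e(\tau_f-1)+1 .
\]
If $f\notin J_f$, then $a\ne0$ and $e\ge2$. Some block has size at least two, so $\mu_f>\tau_f$. Also $e(\tau_f-1)+1<e\tau_f$ since $e>1$, so $\mu_f/\tau_f<e$. Finally $e\le n$ is the Brian\c{c}on--Skoda bound recorded after Definition~\ref{BSe}.

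The main subtlety is bookkeeping rather than a new idea. One must check that $\dim_kA/aA$ equals $\tau_f$, which holds because $(J_f,f)/J_f=aA$ by Proposition~\ref{prop:quotient-equivalence}. One must also check that $\tau_f\ge1$, so the quotient is nonzero; this holds because $f\in\mathfrak m$, so $(J_f,f)\subseteq\mathfrak m$ is proper.
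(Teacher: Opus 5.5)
Your proposal is correct and follows essentially the same route as the paper. You count the Jordan blocks of $m_a$, get $b_1\ge1$ from Corollary~\ref{cor:hns} via \eqref{eq:jordan-count}, identify $\tau_f$ with the number of blocks, bound each block size by $e$, and quote the Brian\c{c}on--Skoda bound for $e\le n$. The only step you leave implicit is that $\beta_f\ge1$ already forces $(J_f:f)+(f)=I_f$ at exponent one; this follows from the ascending chain $J_f:f\subseteq J_f:f^i$ together with $J_f\subseteq J_f:f$, exactly as the paper notes.
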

\begin{proof}
\begin{greenrevision}
Corollary~\ref{cor:hns} gives $\Ann_A(a)\nsubseteq aA$
for $A=R/J_f$ and $a=[f]$. Thus $m_a$ has at least one
size-one Jordan block by \eqref{eq:jordan-count}.
Let $b_s$ denote the number of its blocks of size $s$.
The equality $a^e=0$ means that every block has size at
most $e$, and hence
\[
\mu_f=\sum_{s=1}^{e}s b_s,\qquad
\tau_f=\sum_{s=1}^{e}b_s,\qquad b_1\geq1.
\]
Here $\tau_f=\dim_k(A/aA)$, so the second identity follows
because every block contributes one to that quotient.
Similarly, $R/(J_f,f^2)\simeq A/a^2A$ gives
\[
\dim_kR/(J_f,f^2)
=b_1+2\sum_{s=2}^{e}b_s
=2\tau_f-b_1<2\tau_f.
\]
For the Milnor number, separate the size-one blocks:
\[
\begin{aligned}
\mu_f
&=b_1+\sum_{s=2}^{e}s b_s\\
&\leq b_1+e\sum_{s=2}^{e}b_s
=e\tau_f-(e-1)b_1
\leq e(\tau_f-1)+1.
\end{aligned}
\]
These formulas also cover $e=1$, when all blocks have size
one and $\mu_f=\tau_f$.

To see $\beta_f=0$ directly, note that $J_f\subseteq J_f:f$.
If $(J_f:f)+(f)$ equalled $I_f$, then
$J_f:f\subseteq I_f$, contradicting Corollary~\ref{cor:hns}.
The ascending chain of colon ideals rules out equality for
every larger exponent as well, while exponent zero always
gives $I_f$.

If $f\notin J_f$, multiplication by $a$ is nonzero and at
least one Jordan block has size at least two. Thus
$\mu_f>\tau_f$ and $e\geq2$.
The preceding estimate gives
$\mu_f\leq e\tau_f-(e-1)<e\tau_f$.
Dividing by the positive integer $\tau_f$ and applying
$e\leq n$ proves the last assertion. The Milnor-number
bound agrees with the consequence predicted in
\cite[Proposition 3.7]{HNS}. \qedhere
\end{greenrevision}
\end{proof}

\begin{remark}
The hypothesis on the critical value is necessary. For
$f=1+x_1^2+\cdots+x_n^2$, the ideal $J_f=\mathfrak m$ is
$\mathfrak m$-primary, but $[f]=1$ in $R/J_f$; here
$J_f:f=J_f$ and $(J_f,f)=R$. The finite-jet lemma still gives
$d[f]=0$, but nilpotence fails.
\end{remark}
\end{revision}

The formal power-series case uses a finite truncation to avoid the
continuity issue. For a smooth finite-type algebra, the cotangent module
is finite projective, and a dual basis expresses \(df\) directly as a
linear combination of the derivatives \(\delta(f)\). This gives the
corresponding local statement.

\begin{corollary}\label{cor:smooth-local}
Let \((R,\mathfrak n)\) be a localization of a smooth finitely generated
\(k\)-algebra, where \(\operatorname{char}k=0\), and define
\[
J_f=(\delta(f):\delta\in\operatorname{Der}_k(R)).
\]
If \(R/J_f\) is Artinian local and \(f\in\mathfrak n\), then
\[
J_f:f\not\subseteq(J_f,f)+\mathfrak n(J_f:f).
\]
\end{corollary}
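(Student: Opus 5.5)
The plan is to apply Theorem~\ref{thm:local-strengthening} with $I=J_f$ and $A=R/J_f$, $a=[f]$. This requires three facts: $J_f\ne R$, $A$ is Noetherian with $a$ nilpotent, and $d_{A/k}a=0$.

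The first two facts are quick. Since $A$ is Artinian local by hypothesis, it is Noetherian and nonzero, so $J_f\subsetneq R$. Its maximal ideal is $\mathfrak n/J_f$, and $f\in\mathfrak n$ puts $a$ in that ideal. In an Artinian local ring the maximal ideal is nilpotent, so $a$ is nilpotent.

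The main step, and the one I expect to be the obstacle, is $d_{A/k}a=0$. Here I will use smoothness in place of the truncation argument of Lemma~\ref{lem:finite-jet}.

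1. Since $R$ is a localization of a smooth finitely generated $k$-algebra, $\Omega_{R/k}$ is a finitely generated projective $R$-module, and it is free because $R$ is local.
2. Choose a basis $dy_1,\dots,dy_m$, or more generally a finite dual basis $\omega_j\in\Omega_{R/k}$ and $\phi_j\in\operatorname{Hom}_R(\Omega_{R/k},R)$ with $\omega=\sum_j\phi_j(\omega)\omega_j$ for all $\omega$.
3. Via the universal property, $\operatorname{Hom}_R(\Omega_{R/k},R)\simeq\Der_k(R)$, so each $\phi_j$ corresponds to a derivation $\delta_j=\phi_j\circ d_{R/k}$.
4. Then $d_{R/k}f=\sum_j\delta_j(f)\,\omega_j$ with every coefficient $\delta_j(f)\in J_f$.
5. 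Apply the natural map $\Omega_{R/k}\otimes_RA\to\Omega_{A/k}$ from the conormal sequence. It sends $d_{R/k}f\otimes1$ to $d_{A/k}a$, and the image is $\sum_j[\delta_j(f)]\,\bar\omega_j=0$ since each $[\delta_j(f)]=0$ in $A$.

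The point to be careful about is that localization commutes with Kähler differentials and with derivations. For a finitely presented $\Omega$, every $k$-derivation of the localization extends from the ring, and $\operatorname{Hom}$ commutes with localization. So $\Omega_{R/k}$ is the localization of the finite projective cotangent module, and $\Der_k(R)$ is its dual, which justifies steps 1 to 3. I would cite \cite[Tag 00RM]{Stacks} for these facts.

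With all hypotheses verified, Theorem~\ref{thm:local-strengthening} applied to the local ring $(R,\mathfrak n)$, $I=J_f$, and $K=J_f:f$ gives $J_f:f\nsubseteq(J_f,f)+\mathfrak n(J_f:f)$.
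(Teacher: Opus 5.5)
Your proposal is correct and follows the paper's proof essentially step for step. A dual basis of the finite projective module $\Omega_{R/k}$ writes $d_{R/k}f$ with coefficients $\delta_j(f)\in J_f$, so $d_{A/k}[f]=0$; the Artinian local hypothesis makes $[f]$ nilpotent, and Theorem~\ref{thm:local-strengthening} then applies.

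Your additional remarks on freeness over the local ring and on compatibility with localization are harmless justifications that the paper leaves implicit.
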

\begin{proof}
Smoothness makes \(\Omega_{R/k}\) finite projective. Choose a dual
basis \(\omega_1,\ldots,\omega_s\in\Omega_{R/k}\) and
\(\varphi_1,\ldots,\varphi_s\in
\operatorname{Hom}_R(\Omega_{R/k},R)\), so that
\[
\omega=\sum_j\varphi_j(\omega)\omega_j
\quad\text{for every }\omega\in\Omega_{R/k}.
\]
By the universal property of K\"ahler differentials, every
\(\varphi_j\) corresponds to a derivation
\(\delta_j\in\operatorname{Der}_k(R)\). Applying the dual-basis
identity to \(df\) gives
\[
df=\sum_j\delta_j(f)\omega_j\in J_f\Omega_{R/k},
\]
hence
\(d_{R/J_f}[f]=0\). The remaining assumptions imply that \([f]\) is
nilpotent, so Theorem~\ref{thm:local-strengthening} applies. \qedhere
\end{proof}

\begin{revision}
\begin{remark}[Other power-series categories]\label{rem:other-categories}
\begin{greenrevision}
The finite-jet argument also applies to the following three classes of rings, whose maximal ideals are denoted by $\mathfrak m=(x_1,\ldots,x_n)$:
\begin{enumerate}
\item the polynomial local ring $k[x_1,\ldots,x_n]_{(x_1,\ldots,x_n)}$;
\item the algebraic power-series ring $k\langle x_1,\ldots,x_n\rangle$, consisting of the series in $k[[x_1,\ldots,x_n]]$ algebraic over $k(x_1,\ldots,x_n)$;
\item the convergent power-series ring $k\{x_1,\ldots,x_n\}$ when $k=\mathbb R$ or $\mathbb C$.
\end{enumerate}

In each case, let $f\in\mathfrak m^2$ and suppose $J_f$ is $\mathfrak m$-primary. Choose $N$ with $\mathfrak m^N\subseteq J_f$. Taylor truncation gives a polynomial $P$ satisfying
\[
f-P\in\mathfrak m^{N+1},\qquad
\partial_i f-\partial_i P\in\mathfrak m^N.
\]
Thus the proof of Lemma~\ref{lem:finite-jet} applies verbatim and gives $d_{R/J_f\,/k}[f]=0$ for the ordinary differentials of the finite critical quotient $R/J_f$, yielding the corresponding isolated-critical-point statement.

These ideal memberships may also be checked after the faithfully flat completion $R\to k[[x_1,\ldots,x_n]]$ and then contracted to $R$; see \cite[Chapter 10]{AM}.

Furthermore, a complete equicharacteristic regular local ring with a chosen characteristic-zero coefficient field is a formal power-series ring over that field by the Cohen structure theorem \cite[Tag 0323]{Stacks}, so the preceding statement applies there as well. This argument still requires the critical quotient to have finite length and does not give a statement about arbitrary non-isolated formal critical quotients.
\end{greenrevision}
\end{remark}
\end{revision}

The preceding two corollaries assume that the critical quotient is
Artinian, so nilpotence of \([f]\) follows from finite length. When the
critical locus is positive-dimensional, this implication is no longer
available. We first record a fact about differential constants in
function fields.

\begin{lemma}\label{lem:differential-constants}
Let \(K/k\) be a finitely generated field extension of characteristic
zero. If \(u\in K\) satisfies \(d_{K/k}u=0\), then \(u\) is algebraic
over \(k\).
\end{lemma}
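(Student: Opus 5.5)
We argue by contraposition: if $u\in K$ is transcendental over $k$, we produce a $k$-derivation $\delta:K\to K$ with $\delta(u)\neq0$. By the universal property $\operatorname{Hom}_K(\Omega_{K/k},K)\simeq\Der_k(K)$, such a $\delta$ forces $d_{K/k}u\neq0$.

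First we set up a separating transcendence basis containing $u$. Since $u$ is transcendental over $k$, the singleton $\{u\}$ is algebraically independent. Because $K/k$ is finitely generated, we can extend it to a transcendence basis $u=t_1,t_2,\ldots,t_m$ of $K/k$, using finitely many of the given generators. In characteristic zero, $K$ is then a finite separable extension of $F=k(t_1,\ldots,t_m)$.

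Next we construct the derivation on $F$ and extend it. On the purely transcendental field $F$, the partial derivative $\partial/\partial t_1$ is a well-defined $k$-derivation. It is defined on $k[t_1,\ldots,t_m]$ and extends to fractions by the quotient rule, and it satisfies $\partial t_1/\partial t_1=1$.

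To extend it to $K$, write $K=F(\theta)$ by the primitive element theorem, and let $P\in F[T]$ be the minimal polynomial of $\theta$. Let $P^{\partial}$ denote $P$ with the derivation applied to its coefficients. Set
\[
\delta(\theta)=-P^{\partial}(\theta)\,P'(\theta)^{-1},
\]
which makes sense because separability gives $P'(\theta)\neq0$.

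This $\delta$ defines a derivation on $F[T]/(P)$. The rule $g(\theta)\mapsto g^{\partial}(\theta)+g'(\theta)\delta(\theta)$ is a derivation of $F[T]$ into $K$, and it kills $P$ by the choice of $\delta(\theta)$. It therefore kills the ideal $(P)$, since for $g=hP$ the Leibniz rule gives $h\cdot0+P(\theta)\cdot(\dots)=0$. Hence it descends to $K$.

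The resulting derivation $\delta\in\Der_k(K)$ satisfies $\delta(u)=1\ne0$. This contradicts $d_{K/k}u=0$, so $u$ must be algebraic over $k$.

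The only step needing care is this extension of the derivation through the finite separable extension, where $P'(\theta)\neq0$ is used essentially. It is also the only place where characteristic zero is needed; compare the positive-characteristic example above, where $d(t^p)=0$.

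Alternatively, one could cite the standard fact that $\dim_K\Omega_{K/k}=\operatorname{trdeg}_kK$, with $dt_1,\ldots,dt_m$ a basis, for separably generated extensions, and conclude that $du=dt_1\neq0$. The explicit construction above avoids relying on that fact.
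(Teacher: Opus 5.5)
Your proof is correct and follows essentially the same route as the paper: put $u$ into a transcendence basis of $K/k$, which is automatically separating in characteristic zero, and conclude that $d_{K/k}u\neq0$. The only difference is that the paper cites the standard isomorphism $\Omega_{K/k}\simeq\bigoplus_i K\,dt_i$, whereas you verify the part you need by hand, extending $\partial/\partial t_1$ through the finite separable extension $K/k(t_1,\ldots,t_m)$ to a $k$-derivation with $\delta(u)=1$.
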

\begin{proof}
Choose a separating transcendence basis
\(t_1,\ldots,t_d\) for \(K/k\). Then \(K\) is finite separable over
\(k(t_1,\ldots,t_d)\), and
\[
\Omega_{K/k}\simeq
\bigoplus_{i=1}^dK\,dt_i.
\]
If \(u\) were transcendental over \(k\), it could be included in a
transcendence basis after replacing the chosen basis. In the
corresponding decomposition, \(du\) would be a basis vector and could
not vanish. Thus \(u\) is algebraic over \(k\). \qedhere
\end{proof}

The lemma allows us to work component by component. Differential
vanishing makes the function algebraic over the base field on each
irreducible component; after subtracting its value at the chosen
rational critical point, that algebraic constant must be zero. This
recovers the nilpotence needed for the main theorem.

\begin{corollary}\label{cor:positive-dimensional}
Let \(R\) be a smooth finitely generated algebra over a field \(k\) of
characteristic zero, let \(f\in R\), and let \(x\) be a \(k\)-rational
critical point. \greenrev{Thus $R/\mathfrak m_x=k$ and
$J_f\subseteq\mathfrak m_x$, and $f(x)$ denotes the residue
of $f$ in $k$.} The critical locus need not be zero-dimensional. Put
\[
B=(R/J_f)_{\mathfrak m_x},\qquad a=[f-f(x)]\in B.
\]
Then \(a\) is nilpotent and
\[
\rev{\begin{aligned}
\bigl(J_f:(f-f(x))\bigr)_{\mathfrak m_x}
&\not\subseteq (J_f,f-f(x))_{\mathfrak m_x}\\
&\quad+\mathfrak m_xR_{\mathfrak m_x}
 \bigl(J_f:(f-f(x))\bigr)_{\mathfrak m_x}.
\end{aligned}}
\]
\end{corollary}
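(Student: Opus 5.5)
Put $g=f-f(x)$. Since $dg=df$ and derivations kill constants, $J_g=J_f$. The plan is to apply Theorem~\ref{thm:local-strengthening} to the local ring $R_{\mathfrak m_x}$ with ideal $I=(J_f)_{\mathfrak m_x}$ and element $g$. The quotient $B=R_{\mathfrak m_x}/I=(R/J_f)_{\mathfrak m_x}$ is Noetherian, being a localization of a finitely generated algebra. It is nonzero because $J_f\subseteq\mathfrak m_x$. Colon ideals and sums commute with localization at $\mathfrak m_x$ for finitely generated ideals in a Noetherian ring. Hence the conclusion of Theorem~\ref{thm:local-strengthening}, read in $R_{\mathfrak m_x}$, is exactly the displayed noncontainment. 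Two hypotheses remain: $d_{B/k}a=0$ and nilpotence of $a$.

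\emph{Step 1: differential vanishing.} The dual-basis argument from the proof of Corollary~\ref{cor:smooth-local} gives
\[
dg=\sum_j\delta_j(g)\omega_j\in J_f\Omega_{R/k}.
\]
By the conormal sequence, $d_{(R/J_f)/k}[g]=0$. Localizing (differentials commute with localization) then gives $d_{B/k}a=0$.

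\emph{Step 2: nilpotence, which is the main obstacle.} Since $B$ is Noetherian local, it suffices to show that $a$ lies in every minimal prime of $B$. Then $a$ lies in the nilradical and is nilpotent. Let $\mathfrak q$ be a minimal prime of $R/J_f$ contained in $\mathfrak m_x$, and set $S=(R/J_f)/\mathfrak q$, a finitely generated domain with fraction field $K$. The image $u$ of $g$ in $K$ satisfies $d_{K/k}u=0$. This holds because $d[g]=0$ in $\Omega_{(R/J_f)/k}$ maps to zero in $\Omega_{S/k}$, and hence in $\Omega_{K/k}=\Omega_{S/k}\otimes K$. By Lemma~\ref{lem:differential-constants}, $u$ is algebraic over $k$, and so it is integral over $k$ with minimal polynomial $h\in k[T]$ irreducible, $h(u)=0$ in the domain $S$. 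Reducing modulo the maximal ideal $\mathfrak m_x/\mathfrak q$ of $S$, whose residue field is $k$, gives $h(\bar u)=0$ in $k$. But $\bar u=g(x)=0$, so $T\mid h$; irreducibility forces $h=T$, hence $u=0$. Thus $g\in\mathfrak q$ for every such minimal prime, and $a$ is nilpotent in $B$.

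\emph{Conclusion.} With $B$ Noetherian, $a$ nilpotent and $d_{B/k}a=0$, Theorem~\ref{thm:local-strengthening} applies with $\mathfrak n=\mathfrak m_xR_{\mathfrak m_x}$, and translating the colon ideals back through localization yields the statement. The delicate point is Step 2: one must check that the constant $u$ is forced to be zero rather than merely algebraic, which uses that $x$ is $k$-rational and lies on each component through it.
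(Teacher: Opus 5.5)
Your proposal is correct and follows essentially the paper's own proof: you get $d_{B/k}a=0$ from the dual-basis argument, show $a$ lies in every minimal prime via Lemma~\ref{lem:differential-constants} and the minimal polynomial evaluated at the $k$-rational point, and then apply Theorem~\ref{thm:local-strengthening} on $R_{\mathfrak m_x}$, using that colon ideals commute with localization. The only cosmetic difference is that you index the minimal primes of $B$ by the minimal primes of $R/J_f$ contained in $\mathfrak m_x$; this makes finite generation of the function field immediate, whereas the paper argues it through the inverse image in $R$.
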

\begin{proof}
As above, \(d_{B/k}a=0\). Let \(\mathfrak p\) be a minimal prime of
\(B\), and let \(K=\operatorname{Frac}(B/\mathfrak p)\).
\greenrev{This field is finitely generated over $k$: if
$\mathfrak q\subseteq R$ is the inverse image of
$\mathfrak p$, then $B/\mathfrak p$ is a localization of
the finite-type domain $R/\mathfrak q$, and both have the
same fraction field.}
The image \(\bar a\in K\) has zero differential, so
Lemma~\ref{lem:differential-constants} shows that \(\bar a\) is
algebraic over \(k\). \rev{Let $P(T)\in k[T]$ be the monic minimal polynomial of
$\bar a$. Since $B/\mathfrak p$ injects into its fraction field,
$P(a\bmod\mathfrak p)=0$ already in $B/\mathfrak p$.
The local ring $B/\mathfrak p$ has residue field $k$ at $x$,
because $\mathfrak p$ is contained in the maximal ideal of $B$
and $x$ is $k$-rational. Applying this residue map, and using
$a(x)=0$, gives $P(0)=0$.} Irreducibility then forces
\(P(T)=T\), so \(\bar a=0\). This holds for every minimal prime, hence
\[
a\in\bigcap_{\mathfrak p\in\operatorname{Min}B}\mathfrak p
=\sqrt{0_B}.
\]
The nilradical of a Noetherian ring is nilpotent, so \(a\) is
nilpotent. \rev{Apply Theorem~\ref{thm:local-strengthening} with
the local ambient ring $R_{\mathfrak m_x}$, the ideal
$J_fR_{\mathfrak m_x}$, and $f-f(x)$. Colon ideals by a single
element commute with localization: a relation holding after
localization can be cleared by one denominator. This identifies
the resulting local colon ideal with the one displayed above.} \qedhere
\end{proof}

\section{The integral-closure strengthening is false}
\label{sec:integral-counterexample}

\rev{We now disprove Conjecture~\ref{conj2}. This is a failure
of enlargement of the target ideal to $\overline{J_f}$, not a
failure of the local strengthening proved above.}

\begin{revision}
For a Noetherian local ring $(S,\mathfrak m)$, its
\emph{associated graded ring} is
\[
\gr_{\mathfrak m}S=\bigoplus_{q\geq0}
\mathfrak m^q/\mathfrak m^{q+1}.
\]
If $0\ne h\in S$, write
$\ord_{\mathfrak m}(h)=\max\{q:h\in\mathfrak m^q\}$ and denote
its image in degree $q$ by $h^*$, its \emph{initial form}.
The order is finite by the Krull intersection theorem.
For a quotient $S/H$ we always use the filtration by powers
of its maximal ideal, namely the image of $\mathfrak m$.
The initial ideal $H^*$ is the homogeneous ideal generated
by the initial forms of all nonzero elements of $H$.
The following standard filtered criterion is a special case
of the results of Valabrega and Valla \cite{VV}.
\end{revision}

\begin{lemma}\label{lem:filtered-regular}
\rev{Let $(S,\mathfrak m)$ be a Noetherian local ring with its
$\mathfrak m$-adic filtration, and let
$g_1,\ldots,g_r\in\mathfrak m$.} If their initial forms
\(g_1^*,\ldots,g_r^*\) form a regular sequence in
\(\gr_{\mathfrak m}S\), then
\[
\gr_{\mathfrak m}\bigl(S/(g_1,\ldots,g_r)\bigr)
\simeq
\gr_{\mathfrak m}S/(g_1^*,\ldots,g_r^*).
\]
\end{lemma}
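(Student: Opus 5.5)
Write $H=(g_1,\ldots,g_r)$ and $G=\gr_{\mathfrak m}S$. The plan is to reduce everything to the identity of initial ideals $H^*=(g_1^*,\ldots,g_r^*)$. Indeed, $\gr_{\mathfrak m}(S/H)=\bigoplus_q(\mathfrak m^q+H)/(\mathfrak m^{q+1}+H)$ is canonically isomorphic to $G/H^*$. The reason is that the degree-$q$ piece is $\mathfrak m^q/(\mathfrak m^{q+1}+H\cap\mathfrak m^q)$, and the image of $H\cap\mathfrak m^q$ in $\mathfrak m^q/\mathfrak m^{q+1}$ is exactly $(H^*)_q$. So the lemma is equivalent to $H^*=(g_1^*,\ldots,g_r^*)$. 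The inclusion $\supseteq$ is trivial, since each $g_i^*$ is an initial form of an element of $H$.

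For the reverse inclusion, take $0\ne h\in H$ of order $q$. Among all representations $h=\sum_i s_ig_i$, consider $p=\min_i(\ord s_i+\ord g_i)$, with the convention $\ord 0=\infty$; always $p\le q$. If $p=q$, then $h^*=\sum' s_i^*g_i^*$, summing over the indices attaining the minimum, and we are done. If $p<q$, then the degree-$p$ component gives a homogeneous relation $\sum' s_i^*g_i^*=0$ in $G$.

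Since the $g_i^*$ form a regular sequence, the Koszul complex is exact in degree one. Hence the syzygy $(s_i^*)$ is a $G$-combination of Koszul relations $g_j^*e_i-g_i^*e_j$, with homogeneous coefficients of the appropriate degrees. Lift these coefficients to elements $t_{ij}\in S$ of the matching orders, and replace $s_i$ by $s_i'=s_i-\sum_j(t_{ij}g_j-t_{ji}g_j)$, taking care with signs. Because the Koszul combination vanishes identically, $\sum s_i'g_i=h$ still holds. Moreover, each $s_i'$ now has strictly larger order than $p-\ord g_i$, or the minimum $p$ has increased. Iterating this raises $p$.

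The main obstacle is termination, since $p$ increases but we need it to reach $q$ in finitely many steps. This holds because $p$ is an integer bounded above by $q$, so after at most $q-p$ iterations we reach $p=q$. A subtlety is that a raised term may create new terms of degree $<q$ from other indices. Handling this is the standard bookkeeping: one shows that after a step every $s_i$ satisfies $\ord s_i+\ord g_i\ge p+1$. That needs the homogeneous lifts of the syzygy to match $s_i$ modulo $\mathfrak m^{p-\ord g_i+1}$, which is arranged by choosing the lifts as representatives of the initial-form identity. Alternatively, one can cite Valabrega--Valla \cite{VV}, whose criterion ($g^*$ regular implies $H^*=(g^*)$ and $H\cap\mathfrak m^q=\sum_i g_i\mathfrak m^{q-\ord g_i}$) is exactly this statement. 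Krull's intersection theorem guarantees that the orders are finite throughout.
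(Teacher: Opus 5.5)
Your proof is correct, and after the shared first step it takes a different route. The shared step is the reduction to $H^*=(g_1^*,\ldots,g_r^*)$ via $\gr_{\mathfrak m}(S/H)\cong G/H^*$, writing $G=\gr_{\mathfrak m}S$.

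The paper never uses Koszul homology. It proves the case $r=1$ from the multiplicativity $(g_1h)^*=g_1^*h^*$, which holds because $g_1^*$ is a nonzerodivisor. It then inducts on $r$ by passing to $S'=S/(g_1,\ldots,g_{r-1})$. There the inductive isomorphism $\gr_{\mathfrak m'}S'\cong G/(g_1^*,\ldots,g_{r-1}^*)$ shows that $g_r$ keeps its order in $S'$ and that its initial form there is the image of $g_r^*$, again a nonzerodivisor.

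You instead treat all generators at once in $S$. You use $H_1(g_1^*,\ldots,g_r^*;G)=0$, which holds for a regular sequence over any ring, to lift each homogeneous syzygy of the initial forms to an exact Koszul syzygy of the $g_i$. This raises $p$ until it reaches $\ord h$.

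The paper's argument is more elementary: it needs only the nonzerodivisor condition, one element at a time, at the price of an induction through quotient rings. Yours isolates the property actually used, namely that homogeneous syzygies of the $g_i^*$ lift to syzygies of the $g_i$, so it is really the general standard-basis criterion. It also yields the sharper Valabrega--Valla conclusion $H\cap\mathfrak m^q=\sum_i g_i\mathfrak m^{q-\ord g_i}$ (with $\mathfrak m^j=S$ for $j\le 0$), because the iteration ends with $\ord s_i\ge\ord h-\ord g_i$.

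In a write-up, drop the loose phrases about ``the minimum $p$ has increased'' and about new low-order terms from other indices. Replace them with the single check that does the work:
\begin{itemize}
\item The correction $\sum_j(t_{ij}-t_{ji})g_j$ lies in $\mathfrak m^{p-\ord g_i}$.
\item Its image in degree $p-\ord g_i$ is the $i$th syzygy component, which is $s_i^*$ when $i$ attains the minimum and $0$ otherwise. In the latter case $s_i\in\mathfrak m^{p-\ord g_i+1}$ by definition of $p$.
\item Hence $s_i'$ always has zero image in that degree, so $s_i'\in\mathfrak m^{p-\ord g_i+1}$.
\item Meanwhile $\sum_i s_i'g_i=h$, because $\sum_{i,j}(t_{ij}-t_{ji})g_ig_j=0$ for any family $t_{ij}$.
\end{itemize}
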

\begin{proof}
\begin{greenrevision}
For an ideal $H\subseteq S$, the natural graded map
$\gr_{\mathfrak m}S\to\gr_{\mathfrak m}(S/H)$ is
surjective with kernel $H^*$.
Indeed, a degree-$q$ class represented by
$x\in\mathfrak m^q$ lies in the kernel exactly when
$x\in H+\mathfrak m^{q+1}$. Writing $x=h+y$ with
$h\in H$ and $y\in\mathfrak m^{q+1}$ shows that its
nonzero class is the initial form of an element of $H$.
Thus it suffices to identify the initial ideal.

For $r=1$, let $0\ne h\in S$.
The orders of $h$ and $g_1$ are finite by the Krull
intersection theorem. Since $g_1^*$ is a nonzerodivisor,
the product $g_1^*h^*$ is nonzero in the associated graded
ring. Therefore $g_1h\ne0$ and
\[
\ord_{\mathfrak m}(g_1h)
=\ord_{\mathfrak m}(g_1)+\ord_{\mathfrak m}(h),
\qquad (g_1h)^*=g_1^*h^*.
\]
Every nonzero element of $(g_1)$ is such a product.
It follows that $(g_1)^*=(g_1^*)$, proving the assertion
for one element.

For the induction step, put
$S'=S/(g_1,\ldots,g_{r-1})$ with maximal ideal $\mathfrak m'$.
By induction,
\[
\gr_{\mathfrak m'}S'
\simeq
\gr_{\mathfrak m}S/(g_1^*,\ldots,g_{r-1}^*).
\]
If $d=\ord_{\mathfrak m}(g_r)$, regularity of the sequence
of initial forms says that the image of $g_r^*$ in this
quotient is nonzero and is a nonzerodivisor.
The degree-$d$ image of $g_r$ in $S'$ is therefore nonzero.
In particular, its order in $S'$ is still $d$, and its
initial form is precisely the image of $g_r^*$.
The ring $S'$ is Noetherian local, so its maximal-ideal
filtration is separated. Applying the one-element case
in $S'$ completes the induction.
This proves the stated special case of
\cite[Corollary 2.7]{VV}. \qedhere
\end{greenrevision}
\end{proof}

\begin{greenrevision}
We apply the filtered lemma to a concrete five-variable function.
The associated graded algebra determines the length of the
critical quotient and gives a lower bound on the orders of
annihilator elements. The integral-closure identity will be
proved separately, by showing that the Jacobian ideal is
a reduction of $\mathfrak m^3$.
\end{greenrevision}

\begin{revision}
A finite-dimensional graded algebra $G=\bigoplus_{q\geq0}G_q$
generated by $G_1$ has the \emph{strong Lefschetz property} if
there is $\ell\in G_1$ for which
$\times\ell^r:G_q\to G_{q+r}$ has maximal rank for all $q,r\geq0$.
Such an $\ell$ is a strong Lefschetz element.
Monomial complete intersections have this property in
characteristic zero \cite{Stanley,Watanabe}. We need only the
following explicit injective range.

\begin{lemma}\label{lem:lefschetz}
Let $C=\mathbb Q[x_1,\ldots,x_5]/(x_1^3,\ldots,x_5^3)$ and
$\ell=x_1+\cdots+x_5$. If $q,r\geq0$ and $2q+r\leq10$, then
\[
\times\ell^r:C_q\longrightarrow C_{q+r}
\]
is injective. In particular, multiplication by $\ell^5$ is
injective for $q=0,1,2$.
\end{lemma}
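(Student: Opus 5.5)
The plan is to realize $C$ as a finite-dimensional $\mathfrak{sl}_2$-representation over $\mathbb Q$ in which $\times\ell$ is the raising operator and the grading is the weight grading, and then to read off injectivity from the structure of irreducible $\mathfrak{sl}_2$-modules. This is the standard proof of the strong Lefschetz property for monomial complete intersections, specialized to $C$.

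\textbf{Step 1: one factor.} Let $V=\mathbb Q[x]/(x^3)$ with basis $1,x,x^2$. Define
\[
E(x^j)=x^{j+1},\qquad F(x^j)=j(3-j)\,x^{j-1},\qquad H(x^j)=(2j-2)\,x^j .
\]
A direct check on the three basis vectors gives $[E,F]=H$, $[H,E]=2E$ and $[H,F]=-2F$. Thus $V$ is the irreducible three-dimensional representation, with $E$ equal to multiplication by $x$.

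\textbf{Step 2: the tensor product.} Identify $C\simeq V^{\otimes5}$, with $x_i$ acting on the $i$-th factor. Let $\mathfrak{sl}_2$ act by the coproduct rule, so $E$, $F$, $H$ act as sums of the single-factor operators. Then $E$ acts as multiplication by $\ell=x_1+\cdots+x_5$. Moreover, $H$ acts on $C_q$ by the scalar $\sum_i(2j_i-2)=2q-10$. So $C_q$ is exactly the weight space of weight $\lambda_q=2q-10$, and $\times\ell^r$ is $E^r$ from weight $\lambda_q$ to weight $\lambda_q+2r$.

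\textbf{Step 3: reduction to irreducibles.} Over a field of characteristic zero, finite-dimensional $\mathfrak{sl}_2$-modules are completely reducible (Weyl), and each irreducible summand is $V(m)$ with weights $m,m-2,\ldots,-m$. We may also see this concretely: $H$ is diagonalizable over $\mathbb Q$ and the usual highest-weight argument applies. Injectivity of $E^r$ on a weight space can therefore be checked summand by summand.

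In $V(m)$, take the basis $v_0,\ldots,v_m$ with $v_i=F^iv_0/i!$. Then $Ev_i=(m-i+1)v_{i-1}$, so $E$ is nonzero on every weight vector except the highest one. Hence $E^r$ is injective on the weight-$\lambda$ line exactly when $\lambda+2r\le m$.

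\textbf{Step 4: the numerical condition.} If a summand $V(m)$ meets weight $\lambda\le0$, then $m\ge-\lambda$, so the condition $\lambda+2r\le-\lambda$ suffices for every such summand. With $\lambda=2q-10$ this reads $2q-10+2r\le10-2q$, that is $2q+r\le10$, which is the hypothesis of the lemma.

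If instead $\lambda>0$, i.e.\ $q>5$, then $2q+r\le10$ is impossible. So only the case $\lambda\le0$ arises, and Step 3 gives injectivity of $\times\ell^r:C_q\to C_{q+r}$. Taking $r=5$ gives $2q+5\le10$, hence $q\le2$, which yields the final assertion for $q=0,1,2$.

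\textbf{Main obstacle.} The only nonformal points are the bracket check in Step 1 and the compatibility in Step 2 between multiplication by $\ell$ and the coproduct action of $E$. The latter is immediate because multiplication by $x_i$ acts as $E$ on the $i$-th tensor factor. Everything after that is standard $\mathfrak{sl}_2$ theory in characteristic zero. If one prefers to avoid invoking Weyl's theorem, one can argue directly: since $FE-EF=-H$ and $H$ is semisimple with integer eigenvalues, $C$ decomposes into $E$-strings by the usual primitive-vector argument.
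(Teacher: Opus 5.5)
Your proposal is correct and follows the paper's proof essentially step for step. It uses the same $\mathfrak{sl}_2$-triple on $\mathbb Q[x]/(x^3)$ and the same summed action on $V^{\otimes5}$, where raising is $\times\ell$ and $C_q$ has weight $2q-10$; it then applies complete reducibility and the identical check $\lambda+2r\le|\lambda|\le m$. The differences are cosmetic: you index $V(m)$ downward via $v_i=F^iv_0/i!$ rather than normalizing $e v_j=v_{j+1}$, and you stay over $\mathbb Q$ using the diagonal action of $H$, whereas the paper extends scalars to $\mathbb C$.
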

\begin{proof}
On $V=\mathbb Q[x]/(x^3)$ define, for $j=0,1,2$,
\[
e(x^j)=x^{j+1},\qquad
h(x^j)=(2j-2)x^j,\qquad
b(x^j)=j(3-j)x^{j-1},
\]
with $e(x^2)=b(1)=0$. Direct evaluation on $1,x,x^2$ gives
\[
[h,e]=2e,\qquad [h,b]=-2b,\qquad[e,b]=h,
\]
where $[X,Y]=XY-YX$ is the commutator.
These are the defining relations of the Lie algebra
$\mathfrak{sl}_2$, the trace-zero $2\times2$ matrices.
On $C\simeq V^{\otimes5}$, sum each of the three operators over
the five factors. Operators on different factors commute,
so the same relations hold. The resulting $e$ is
multiplication by $\ell$, and $h$ acts on $C_q$ by the scalar
$\lambda=2q-10$.

For clarity, the representation-theoretic input is the
complete reducibility of finite-dimensional
$\mathfrak{sl}_2(\mathbb C)$-modules and the description of
their irreducibles \cite[Sections 6--7]{Humphreys}.
We may extend scalars from $\mathbb Q$ to $\mathbb C$, because
the rank of a rational matrix does not change under this
field extension.
An irreducible module of highest weight $m\geq0$ has
one-dimensional weight spaces of weights
$-m,-m+2,\ldots,m$. Choosing vectors $v_0,\ldots,v_m$,
we may normalize the raising operator so that
$e v_j=v_{j+1}$ for $j<m$ and $e v_m=0$, with
$h v_j=(2j-m)v_j$.
\begin{greenrevision}
If this module contains weight $\lambda$, its corresponding
basis vector is $v_j$ with $j=(\lambda+m)/2$, and
$m\geq|\lambda|$.
The normalization gives $e^rv_j=v_{j+r}$ when $j+r\leq m$.
This condition is equivalent to $\lambda+2r\leq m$.
Thus on the one-dimensional $\lambda$-weight space the
map $e^r$ is injective whenever this inequality holds.
\end{greenrevision}
Our assumption gives $\lambda\leq0$ and
\[
\lambda+2r=2q-10+2r\leq10-2q=|\lambda|\leq m.
\]
Consequently $e^r$ is injective on the $\lambda$-weight space
of every irreducible summand, hence on $C_q$.
This proves the assertion over $\mathbb C$ and therefore
over $\mathbb Q$. \qedhere
\end{proof}
\end{revision}

\begin{theorem}\label{thm:integral-counterexample}
Let
\[
R=\mathbb Q[[x_1,\ldots,x_5]],\qquad
\greenrev{\mathfrak m=(x_1,\ldots,x_5)},\qquad
\ell=x_1+\cdots+x_5,
\]
and set
\[
f=\frac14\sum_{i=1}^5x_i^4+\ell^5.
\]
Then \(f\) defines an isolated hypersurface singularity and
\[
\greenrev{J_f:f\subseteq\overline{J_f}.}
\]
More precisely, \(\overline{J_f}=\mathfrak m^3\).
\end{theorem}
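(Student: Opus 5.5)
Throughout, write $g_i=\partial_if=x_i^3+5\ell^4$. The plan is to reduce everything to the graded algebra $C$ of Lemma~\ref{lem:lefschetz}.

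\textbf{Isolatedness and the initial forms.} The initial forms of the $g_i$ in $\gr_{\mathfrak m}R=\mathbb Q[x_1,\ldots,x_5]$ are $x_i^3$. These form a regular sequence, so Lemma~\ref{lem:filtered-regular} gives
\[
\gr_{\mathfrak m}(R/J_f)\simeq C=\mathbb Q[x]/(x_1^3,\ldots,x_5^3).
\]
This algebra has length $3^5$ and vanishes in degrees $>10$. Hence $J_f$ is $\mathfrak m$-primary, $\mu_f=243$, and $f$ has an isolated critical point.

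\textbf{Integral closure.} Clearly $J_f\subseteq\mathfrak m^3$, and $\mathfrak m^3$ is integrally closed, being a power of the maximal ideal of a regular local ring. So it suffices to show that $J_f$ is a reduction of $\mathfrak m^3$. I would use initial forms again. A monomial of degree $3q$ with $q$ large has some exponent $\geq3$, so
\[
(x_1^3,\ldots,x_5^3)\,\mathfrak m^{3(q-1)}=\mathfrak m^{3q}\qquad\text{for }q\geq5
\]
in the polynomial ring. Passing to $R$, this gives $\mathfrak m^{3q}\subseteq J_f\mathfrak m^{3q-3}+\mathfrak m^{3q+1}$. Nakayama's lemma then yields $\mathfrak m^{3q}=J_f\mathfrak m^{3q-3}$. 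Therefore $\overline{J_f}=\overline{\mathfrak m^3}=\mathfrak m^3$.

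\textbf{Containment of the conductor.} Let $u\in J_f:f$, and suppose $\ord(u)=q\leq2$; I aim to derive a contradiction. Work in $A=R/J_f$, filtered by powers of its maximal ideal, with $\gr A=C$.
\begin{itemize}
\item[(i)] The order of $u$ in $A$ is still $q$, because $J_f\subseteq\mathfrak m^3$.
\item[(ii)] Modulo $J_f$ we have $x_i^4\equiv-5x_i\ell^4$. Summing over $i$ gives $\tfrac14\sum_ix_i^4\equiv-\tfrac54\ell^5$, so $f\equiv-\tfrac14\ell^5$ in $A$.
\item[(iii)] Hence $uf=0$ in $A$ gives $u\ell^5=0$ in $A$.
\item[(iv)] Pass to initial forms. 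The product $u^*\ell^5$ lies in $C_{q+5}$. By Lemma~\ref{lem:lefschetz} it is nonzero, since $q\leq2$ gives $2q+5\leq10$.
\item[(v)] Therefore $u\ell^5$ has order exactly $q+5$ in $A$, so $u\ell^5\neq0$, which contradicts (iii).
\end{itemize}
The implication used in (iv)–(v) is that if $u^*\ell^{5}\ne0$ in $\gr A$, then $u\ell^5\neq0$; this holds because initial forms multiply whenever their product is nonzero. Consequently every $u\in J_f:f$ has order at least $3$, so $J_f:f\subseteq\mathfrak m^3=\overline{J_f}$.

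\textbf{Main obstacle.} The delicate step is the congruence $f\equiv-\tfrac14\ell^5$ modulo $J_f$. It is an exact identity, not merely one up to higher order, and the argument depends on it being exact. Its role is that $\ell^5$, rather than $f$ itself, carries the initial-form information in $C$. Everything else reduces to Lemma~\ref{lem:filtered-regular} and Lemma~\ref{lem:lefschetz}.
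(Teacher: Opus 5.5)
Your proposal is correct and is essentially the paper's proof: $\gr A\simeq C$ via Lemma~\ref{lem:filtered-regular}, the Nakayama argument making $J_f$ a reduction of $\mathfrak m^3$, the identity $[f]=-\frac14[\ell^5]$ in $A$, and injectivity of $\times\ell^5$ on $C_0,C_1,C_2$ applied to initial forms. The paper just proves directly some standard facts you cite or leave implicit: that $\mathfrak m^3$ is integrally closed, that a reduction has the same integral closure, and that $\mathfrak m_A^{11}=0$ follows by Nakayama from $C_{11}=0$. One small correction to your closing remark: exactness of the congruence is not essential, since only the initial form $-\frac14\ell^5$ of $[f]$ enters, so a congruence modulo $\mathfrak m_A^6$ would already suffice.
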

\begin{proof}
\begin{greenrevision}
Put $\mathfrak m=(x_1,\ldots,x_5)$ and write
\[
g_i=\partial_i f=x_i^3+5\ell^4,\qquad J=J_f=(g_1,\ldots,g_5).
\]
Their initial forms are $x_1^3,\ldots,x_5^3$.
These form a regular sequence in $\mathbb Q[x_1,\ldots,x_5]$:
after quotienting by the preceding powers, multiplication
by the next power is injective, as can be seen by viewing
the quotient as a polynomial ring in the remaining variables.
Lemma~\ref{lem:filtered-regular} gives
\begin{equation}\label{eq:graded-counterexample}
\gr_{\mathfrak m_A}A
\simeq C:=\mathbb Q[x_1,\ldots,x_5]/(x_1^3,\ldots,x_5^3),
\qquad A=R/J,\quad\mathfrak m_A=\mathfrak m/J.
\end{equation}
The monomials $x_1^{a_1}\cdots x_5^{a_5}$ with
$0\leq a_i\leq2$ form a basis of $C$.
Consequently $\dim_{\mathbb Q}C=3^5=243$, and $C_q=0$
for $q>10$. In particular,
$\mathfrak m_A^{11}/\mathfrak m_A^{12}=0$.
The ideal $\mathfrak m_A^{11}$ is a finite $A$-module, so
Nakayama's lemma applied to
$\mathfrak m_A^{11}=\mathfrak m_A\mathfrak m_A^{11}$
gives $\mathfrak m_A^{11}=0$.
Thus $J$ is $\mathfrak m$-primary, and summing the dimensions
of the finite filtration yields
\[
\mu_f=\dim_{\mathbb Q}A=\sum_{q=0}^{10}\dim_{\mathbb Q}C_q=243.
\]
Since $f\in\mathfrak m^4\mathrel{\textcolor{black}{\subsetneq}}\mathfrak m^2$, the critical
point and the hypersurface singularity are isolated.

We next prove $\overline J=\mathfrak m^3$.
First, $\mathfrak m^3$ is integrally closed.
If $0\ne u$ is integral over it, choose an equation
\[
u^s+b_1u^{s-1}+\cdots+b_s=0,\qquad b_j\in\mathfrak m^{3j}.
\]
Suppose $v=\ord_{\mathfrak m}(u)<3$.
Because $\gr_{\mathfrak m}R$ is a polynomial ring and hence
a domain, the order of a product of nonzero elements is
the sum of their orders. The term $u^s$ has order $sv$,
while every other nonzero term has order at least
$3j+(s-j)v>sv$. Their degree-$sv$ initial forms therefore
cannot cancel. This contradiction gives $v\geq3$.
The element zero also belongs to $\mathfrak m^3$, proving
integral closedness. Since $J\subseteq\mathfrak m^3$,
monotonicity gives $\overline J\subseteq\mathfrak m^3$.

For the reverse inclusion, every monomial of total degree
twelve in five variables is divisible by some $x_i^3$,
because exponents all at most two would have total degree
at most ten. Therefore
\[
\mathfrak m^{12}
=(x_1^3,\ldots,x_5^3)\mathfrak m^9.
\]
Substitute $x_i^3=g_i-5\ell^4$ to obtain
\[
\mathfrak m^{12}\subseteq J\mathfrak m^9+\mathfrak m^{13}.
\]
The reverse containment holds since $J\subseteq\mathfrak m^3$.
For the finite $R$-module $Q=\mathfrak m^{12}/J\mathfrak m^9$,
we consequently have
\[
Q=\frac{J\mathfrak m^9+\mathfrak m^{13}}{J\mathfrak m^9}
=\mathfrak mQ.
\]
Nakayama's lemma yields
\[
\mathfrak m^{12}=J\mathfrak m^9.
\]
Equivalently, $(\mathfrak m^3)^4=J(\mathfrak m^3)^3$,
so $J$ is a reduction of $\mathfrak m^3$.

Here is the integral-dependence argument directly.
Fix $u\in\mathfrak m^3$, put $M=\mathfrak m^9$, and choose
generators $b_1,\ldots,b_t$ of $M$.
Since $uM\subseteq\mathfrak m^{12}=JM$, there are
$c_{ij}\in J$ such that
\[
ub_i=\sum_{j=1}^t c_{ij}b_j\qquad(1\leq i\leq t).
\]
Multiplying this matrix equation by the adjugate of
$uI_t-(c_{ij})$ shows that
$\det(uI_t-(c_{ij}))$ annihilates $M$.
The ring $R$ is a domain and $M\ne0$, so this determinant
vanishes in $R$.
Its expansion has the form
\[
u^t+d_1u^{t-1}+\cdots+d_t=0,\qquad d_j\in J^j.
\]
Thus every $u\in\mathfrak m^3$ is integral over $J$, and
\begin{equation}\label{eq:closure-counterexample}
\overline J=\mathfrak m^3.
\end{equation}
In particular, $f\in\mathfrak m^4\subseteq\overline J$,
so this example satisfies the integrality hypothesis of
Conjecture~\ref{conj2}.

It remains to prove $J:f\subseteq\mathfrak m^3$.
The identities
\[
\sum_i x_ig_i=\sum_i x_i^4+5\ell^5,\qquad
4f=\sum_i x_i^4+4\ell^5
\]
give in $A$
\begin{equation}\label{eq:a-leading}
a=[f]=-\frac14[\ell^5].
\end{equation}
Lemma~\ref{lem:lefschetz} shows that
$\times\ell^5:C_q\to C_{q+5}$ is injective for $q=0,1,2$.
In particular $\ell^5\ne0$ in $C_5$.
Thus $a$ has order exactly five for the
$\mathfrak m_A$-adic filtration, and its initial form is
$a^*=-\frac14\ell^5$ under \eqref{eq:graded-counterexample}.

Let $0\ne h\in\Ann_A(a)$ and set $q=\ord_{\mathfrak m_A}(h)$.
This order is finite because $\mathfrak m_A^{11}=0$.
Multiplication in the associated graded algebra sends the
classes of $h$ and $a$ to the class of $ha$ in
$\mathfrak m_A^{q+5}/\mathfrak m_A^{q+6}$.
Since $ha=0$, it follows that
\[
0=h^*a^*=-\frac14 h^*\ell^5.
\]
The initial form $h^*$ is nonzero, so injectivity in
degrees $0,1,2$ forces $q\geq3$.
This proves $\Ann_A(a)\subseteq\mathfrak m_A^3$.
The argument only uses the implication
$ha=0\Rightarrow h^*a^*=0$; it does not require equality
between the initial ideal of $\Ann_A(a)$ and
$\Ann_C(a^*)$.

Finally, the inverse image of $\Ann_A(a)$ under $R\to A$
is $J:f$, while the inverse image of $\mathfrak m_A^3$
is $\mathfrak m^3+J=\mathfrak m^3$.
Consequently
\[
J:f\subseteq\mathfrak m^3=\overline J,
\]
as claimed. \qedhere
\end{greenrevision}
\end{proof}

\begin{remark}
The example does not contradict Corollary~\ref{cor:hns}. It shows that
the right-hand side cannot be enlarged uniformly to
\(\overline{J_f}\), or to an arbitrary ideal containing
\(\mathfrak m^3\). The local strengthening uses the controlled term
\(\mathfrak m(J_f:f)\).
\end{remark}

\begin{revision}

\end{revision}
\end{document}